\newcommand{\nn}{\nonumber}
\newcommand{\CAL}[1]{{\mathcal #1}}
\newcommand{\J}[1]{\left\langle #1 \right\rangle}
\newcommand{\pt}{\partial}     
\renewcommand{\th}{\theta}
\newcommand{\F}{\mathcal {F}}
\renewcommand{\S}{\mathcal {S}}
\newcommand{\re}{\mathbb R}
\newcommand{\R}{\mathbb R}
\newcommand{\N}{\nabla}
\newcommand{\al}{\alpha}
\newcommand{\gm}{\gamma}
\newcommand{\ep}{\varepsilon}
\newcommand{\lam}{\lambda}
\newcommand{\del}{\delta}
\newcommand{\Del}{\Delta}
\newcommand{\sg}{\sigma}
\renewcommand{\t}{\tau}
 \newtheorem{thm}{Theorem}[section]
 \newtheorem{lem}[thm]{Lemma}
 \newtheorem{prop}[thm]{Proposition}
 \newtheorem{rem}[thm]{Remark}
 \numberwithin{equation}{section}
\begin{document}

\begin{flushleft}
{ \Large \bf Asymptotic behavior of solutions to 
a dissipative nonlinear Schr\"odinger equation with time dependent harmonic potentials}
\end{flushleft}

\begin{flushleft}
{\large Masaki KAWAMOTO}\\
{Department of Engineering for Production, Graduate School of Science and Engineering, Ehime University, 3 Bunkyo-cho Matsuyama, Ehime, 790-8577. Japan }\\
Email: {kawamoto.masaki.zs@ehime-u.ac.jp}
\end{flushleft}
\begin{flushleft}
{\large Takuya SATO}\\
{Mathematical Institute, Tohoku University, Sendai, Miyagi, 980-8578, Japan \\
        E-mail: takuya.sato.b1@tohoku.ac.jp
}
\end{flushleft}
\begin{abstract}
We consider the Cauchy problem of a dissipative nonlinear Schr\"odinger equation
with a time dependent harmonic potential.
We find a critical situation that the $L^2$-norm of dissipative solutions decays or not and which is decided by 
a nonlinear power and time decay order of harmonic potential.\end{abstract}

\begin{flushleft}
%{\em Keywards:} \\ ~~ \\ 
{\em Mathematical Subject Classification:} \\ 
Primary: 35Q55, Secondly: 35Q40.
\end{flushleft}

\section{Introduction}

We consider the Cauchy problem of the nonlinear Schr\"odinger equation
with a time dependent harmonic potential:
\begin{align}\label{eqn;dNLS}
\begin{cases}
& i \pt_t u - H_0(t)u = \lambda|u|^{p-1}u ,   
      t \in \re, x \in \re^n,\\
&  u(0,x)  =  u_0(x), 
      x \in \re^n,
     \end{cases}
\end{align}
where $\lam \in \mathbb{C} \setminus \{0\}$ with Im\,$\lam<0$,
$p>1$,
$u=u(t,x): \re \times \re^n \to \mathbb{C}$ is the unknown function 
and $u_0: \re^n \to \mathbb{C}$ is a given initial data.
Here, $H_0(t) = -\frac{1}{2}\Del + \frac{\sg(t)}{2} |x| ^2$,
$\sg(t)$ is a real valued function.

%%%%%%%%%%%%%%%%%%%%%%%%%%%%%
%%%%%%%%%%%%%%%%%%%%%%%%%%%%%

In case with $\sg \equiv 0$, i.e., the Cauchy problem of the nonlinear Schr\"odinger equation;
\begin{equation}\label{eqn;dNLS-K1}
\left\{
\begin{split}
& i \pt_t u + \frac12\Delta u = \lam|u|^{p-1}u ,   
     & t \in \re, x \in \re^n,\\
&  u(0,x)  =  u_0(x), 
     & x \in \re^n,
\end{split}
\right.
\end{equation}\noindent 
the local well-posedness of the problem 
\eqref{eqn;dNLS-K1}  was established 
by Ginibre-Velo \cite{G-V-JFA-1979},  
Kato \cite{K-AIHP-1987}, Y. Tsutsumi \cite{T-FE-1987} and Yajima \cite{Y-CMP-1987}
(see also \cite{C}, \cite{C-W-MM-1988}).
For  $\lam \in \re \setminus \{0\}$, 
it is well known that the exponent $p=1+2/n$ is a borderline
to indicate that  solutions scatter or not and this exponent is called as the Barab-Ozawa exponent (see \cite{B-JMP-1984}, \cite{O-CMP-1991}).
For $\lam \in \mathbb{C}$ with
Im\,$\lam < 0$,
the asymptotic behavior of the solution to \eqref{eqn;dNLS-K1}
have been studied by many authors recently.
In this  setting, 
the solution to \eqref{eqn;dNLS-K1} satisfies the dissipative energy identity:
\begin{align}
\label{eqn;mass}
   \|u(t)\|^2_{L^2}\, 
    + \,|{\rm Im}\,\lam|
          \int_0^t
             \|u(\tau)\|^{p+1}_{L^{p+1}}\,
          d\tau
     = \|u_0\|^2_{L^2},  \quad t \geq 0.
\end{align}
Hence, the $L^2$-norm of the corresponding solution is monotone decreasing 
in $t \geq 0$,
however it is whether the $L^2$-norm 
decays or not  as $t$ goes to infinity.
In recent works 
\cite{H-L-N-AMP-2016},
\cite{H-JMP-2019},
\cite{K-S-JDE-2007}, \cite{O-S-NoDEA-2020}, 
 \cite{S-AM-2020}, \cite{S-SN-2021}, \cite{S-CPDE-2006},
it is known that $p=1+2/n$ is the critical exponent to exhibit 
the $L^2$-decay of dissipative solutions to \eqref{eqn;dNLS-K1}.
The $L^2$-lower bound of the dissipative solution
is proved when $p > 1+ 2/n$ in
\cite{S-SN-2021}.
For $p \le 1+2/n$,
Kita-Shimomura \cite{K-S-JDE-2007} 
and Shimomura \cite{S-CPDE-2006}
observed the dissipative structure of \eqref{eqn;dNLS-K1} 
under Im\,$\lam<0$
and proved 
the $L^2$-decay  of dissipative solutions % 
(cf. Sunagawa \cite{S-JMSJ-2006} for the case of the 
nonlinear Klein-Gordon equation).
Kita-Shimomura \cite{K-S-JMSJ-2009} removed
the restriction of the size of initial data
and Hayashi-Li-Naumkin \cite{H-L-N-AMP-2016}
showed the decay rate of the dissipative solution with large data
by imposing the strong dissipative condition
\begin{align}
\label{eqn;SD}
  \frac{p-1}{2\sqrt{p}}|\text{Re}\,\lam| 
   \le |\text{Im}\,\lam|,
\end{align}
where $p>1$ is the exponent of the power type
nonlinearity in the problem \eqref{eqn;dNLS-K1}.
The condition \eqref{eqn;SD} was considered by Liskevich-Perelmuter \cite{L-P-PAMS-1995}. Okazawa-Yokota \cite{O-Y-JDE-2002} applied \eqref{eqn;SD} 
to the theory of
the complex Ginzburg-Landau equation, 
where they showed the global existence
and smoothing property of the solution. Hayashi-Li-Naumkin \cite{H-L-N-AMP-2016} obtained  the $L^2$-decay rate 
of the dissipative solution under the condition \eqref{eqn;SD}.
Hoshino \cite{H-JMP-2019} showed the $L^2$-decay order of  
dissipative solutions  in the Sobolev space with a low differential order.
Ogawa-sato \cite{O-S-NoDEA-2020} and Sato \cite{S-AM-2020} showed the $L^2$-decay order of solutions which has analytic or Gevrey regularity in spatial variable.

Under the existence of $\sigma (t)$, Carles \cite{Ca} and Carles-Silva \cite{CS} considered the existence of global solution and blow-up issues for \eqref{eqn;dNLS} with more generalized time-dependent harmonic potentials such that $H_0(t) = -\Delta /2 + V(t,x)$ with condition $|V(t,x)| \leq C (1+ |x|^2 )$ as $|x| \gg 1$. In the paper \cite{Ca}, the time-in-local Strichartz estimates for \eqref{eqn;dNLS} with $\lambda =0$ also have been shown and which are applied in some proofs. On the other hand, Kawamoto-Yoneyama \cite{KY-JEE-2018} and Kawamoto \cite{Kawa} focussed on the case where the harmonic potential can be written as $\sigma (t) |x|^2$ with explicit expression of $\sigma (t) = c_1 t^{-2}$, $c_1 \in [0,1/4)$ for $|t| \gg 1$. In that case, the linear solution of \eqref{eqn;dNLS} has weak dispersive estimates compared with the linear solution of \eqref{eqn;dNLS-K1} and that possible to show global-in-time Strichartz estimates. By using such Strichartz estimates, the asymptotic behavior of solutions have been concerned 
 in Kawamoto \cite{K-JDE-2021}. In \cite{K-JDE-2021}, the critical case $c_1 = 1/4$ is also investigated and found that the threshold of scattering of solutions change by the situation of a time decaying harmonic oscillator (see also \cite{K-JMAA-2021}, \cite{KM-JEE-2021}). 

Our interest in this paper is to find the $L^2$-decay (mass decay) criterion for the problem \eqref{eqn;dNLS} with a dissipative nonlinear setting and a time dependent harmonic potential. 
If $\sg $ is positive constant, 
Antonelli-Carles-Sparber \cite{ACS-IMRN-2013} showed that the mass of dissipative solutions decays as $t \to \infty$ 
whenever $p>1$, namely the critical situation
for the mass decay is no longer appeared in the dissipative nonlinear Schr\"odinger equation.
In this paper, we consider the problem \eqref{eqn;dNLS} which contains  the time dependent harmonic potential and we reveal 
a relation between the nonlinear power, an effect of the time decaying potential and the mass decay of dissipative solutions.

%%%%%%%%%%%%%%%%%%%%%%%%%%%
%%%%%%%%%%%%%%%%%%%%%%%%%%%

%
%　　　　　　　　　　重　み　空　間　　
%
Let $\S(\re^n)$ be the set of smooth and rapidly decreasing functions.
For any $f \in \S(\re^n)$, 
we define the Fourier transform
and the Fourier inverse transform, respectively.
\begin{align*}
   \F [f](\xi) 
    = \widehat{f}(\xi)
    \equiv \frac{1}{(2\pi)^{\frac{n}{2}}} 
                     \int_{\re^n}  e^{-ix \cdot \xi}f(x) \,dx, \quad
    \F^{-1} [f](x) 
       \equiv \frac{1}{(2\pi)^{\frac{n}{2}}} 
                     \int_{\re^n}  e^{ix \cdot \xi}f(\xi) \,d\xi.
\end{align*}

%
%　　　　　　　　　　重　み　空　間　　
For $s>0$ and $r>0$,  let
\begin{align*}
  H^{s,r}(\re^n) \equiv
      \Big\{f \in H^s(\re^n);\,
           \|f\|_{H^{s,r}} \equiv   \|\J{x}^r f\|_{L^2}+\|\J{\N}^s f\|_{L^2}
       <\infty
    \Big\},
\end{align*}
where $\J{x} \equiv (1+|x|^2)^{1/2}$ and
$\J{\N}^s f \equiv \F^{-1}[\J{\xi}^s \widehat{f}\,]$
and we denote 
$H^{s,0}(\re^n)=H^s(\re^n)$.

Let us introduce the setting of a time dependent harmonic potential in the linear part:
\begin{align}
\label{k1}
  H_0(t) = -\frac{1}{2}\Del + \frac{\sigma (t)}{2} |x| ^2.
\end{align}
We define $U_0(t,s)$ as a propagator for $H_0(t)$, that is, a family of unitary operators on $L^2(\re^n)$ satisfying 
\begin{align*}
& i \partial _t U_0(t,s) = H_0(t) U_0(t,s) , \quad i \partial _s U_0(t,s) = - U_0(t,s) H_0(s), \\ 
& U_0(t,\tau) U_0(\tau ,s) = U_0(t,s), \quad U(s,s) = \mathrm{Id}_{L^2} 
\end{align*}
on $\mathscr{D} (-\Delta + |x|^2 )$.
Let $y_j(t)$, $j=1,2$ be a solution to
\begin{align}\label{eqn;sg-equation}
y''(t) + \sigma (t) y(t) = 0, \quad 
\begin{cases}
y_1(0) = 1, \\ 
y_1'(0) = 0 ,
\end{cases}
\quad 
\begin{cases}
y_2(0) = 0 , \\ 
y_2'(0) = 1. 
\end{cases}
\end{align}
We assume that $y_j(t)$ satisfies following conditions;
\begin{enumerate}
\vspace{1mm}
\item[(A).] There exist $c_0 >0$ and $T_0 \geq 0$ such that for any $t \geq T_0$,
it follows that $|y_2 (t)| \geq c_0$. \\ 
\vspace{-2mm}
\item[(B).] For all $t \geq T_0$, $y_j(t)$ and $y_j '(t)$, $j=1,2$ are continuous. \\ 
\vspace{-2mm}
\item[(C).] 
There exist $C>0$ and $\del>0$ such that for all $t \ge T_0$,
an asymptotic behavior is given by
\begin{align} \label{K10}
 \left| \frac{y_1(t)}{y_2(t)} \right| \le Ct^{-\del}.
\end{align}
\end{enumerate}

Here, we do not assume the some asymptotic conditions to $\sigma (t)$ but to $y_j (t)$, $j=1,2$. The reason why is the dispersive estimates for linear equations is characterized through $y_j (t)$, $j=1,2$, which was found by Korotyaev; 
\begin{lem} [Korotyaev \cite{Ko}]
For $\phi \in L^{1}({\R }^n)$, the following pointwise estimate holds: For any $t,s \in \re$,
\begin{align*}
\left\| 
U_0(t,s) \phi
\right\|_{L^{\infty}} \leq C \left| 
y_1(t) y_2(s) - y_1(s) y_2(t)  
\right|^{-\frac{n}{2}} \left\| \phi \right\|_{L^{1}}
\end{align*}
Particularly,  it holds in case with $s=0$ that
\begin{align*}
\left\| 
U_0(t,0) \phi
\right\|_{L^{\infty}} \leq C \left| 
 y_2(t)  
\right|^{-\frac{n}{2}} \left\| \phi \right\|_{L^{1}}.
\end{align*}
\end{lem}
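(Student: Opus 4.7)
The strategy is to construct the integral kernel of $U_0(t,s)$ explicitly and then read off the $L^\infty$-bound from its pointwise modulus. Since $H_0(t)$ is a quadratic Hamiltonian, one expects (and can verify) that
\[
(U_0(t,s)\phi)(x) = \int_{\R^n} K(t,s;x,y)\,\phi(y)\,dy, \qquad K(t,s;x,y) = A(t,s)\, e^{i\Phi(t,s;x,y)},
\]
where $\Phi$ is a real-valued quadratic form in $(x,y)$ and $A(t,s) \in \mathbb{C}$. Then $|K|=|A|$, so $\|U_0(t,s)\phi\|_{L^\infty} \le |A(t,s)|\,\|\phi\|_{L^1}$ by Young's inequality, and the lemma reduces to identifying $|A(t,s)|$ as a constant multiple of $|y_1(t)y_2(s)-y_1(s)y_2(t)|^{-n/2}$.

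To determine $\Phi$ and $A$, I would use the van Vleck (WKB) prescription. The classical flow $y''(r)+\sigma(r)y(r)=0$ has general solution $y(r)=\alpha y_1(r)+\beta y_2(r)$, and the two-point problem $y(s)=\eta$, $y(t)=x$ has a unique solution whenever
\[
w(t,s) := y_1(t)y_2(s) - y_1(s)y_2(t) \neq 0,
\]
with coefficients $\alpha,\beta$ linear in $(x,\eta)$ and with denominator $w(t,s)$. The classical action along this trajectory supplies $\Phi(t,s;x,\eta)$. Plugging the ansatz into $i\pt_t K = H_0(t) K$ and matching powers of $x$ yields the Hamilton--Jacobi equation for $\Phi$ (solved automatically by the classical action) together with a transport equation for $A$; integrating the latter gives $|A(t,s)| = (2\pi|w(t,s)|)^{-n/2}$ up to a constant, with the normalization fixed by the initial condition $K(s,s;x,y)=\delta(x-y)$. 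The Wronskian identity $y_1 y_2'-y_1' y_2 \equiv 1$, which follows from the absence of a first-order term in the ODE and from the prescribed data in \eqref{eqn;sg-equation}, is used repeatedly. The $s=0$ specialization is immediate, since \eqref{eqn;sg-equation} gives $w(t,0)=-y_2(t)$.

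The main obstacle will be making the formal construction rigorous across the zeros of $w(t,s)$ (the conjugate points of the classical flow) and verifying that the kernel so produced really represents the unitary $U_0(t,s)$ on a dense subspace such as $\mathcal{S}(\R^n)$. A clean route around this is the metaplectic representation: the Hamiltonian flow of $H_0(t)$ produces the symplectic matrix $M(t)M(s)^{-1}$, where the columns of $M(r)$ are $(y_1(r),y_1'(r))^T$ and $(y_2(r),y_2'(r))^T$; a direct computation using the Wronskian normalization shows that its off-diagonal block equals $-w(t,s)\,\mathrm{Id}_n$, and the classical formula for the integral kernel of a metaplectic operator in terms of the block entries of the associated symplectic matrix (see, e.g., Folland's \emph{Harmonic Analysis in Phase Space}) produces simultaneously the explicit form of $\Phi$ and the amplitude $(2\pi|w(t,s)|)^{-n/2}$. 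Either route delivers the claimed dispersive bound.
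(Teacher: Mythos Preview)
Your proposal is correct, and in fact the paper does not prove this lemma at all: it is quoted from Korotyaev \cite{Ko} and used as a black box. Your sketch via the explicit Mehler-type kernel (equivalently, the metaplectic representation of the classical flow) is precisely the standard route to this estimate, and the computation of the amplitude as $(2\pi|w(t,s)|)^{-n/2}$ with $w(t,s)=y_1(t)y_2(s)-y_1(s)y_2(t)$ is the key point; the $s=0$ reduction using $y_1(0)=1$, $y_2(0)=0$ is exactly as you wrote. The only caveat you already flagged---that the kernel formula is valid away from zeros of $w(t,s)$---is unavoidable, and the statement of the lemma should be read with this tacit restriction (the right-hand side is $+\infty$ at conjugate times, so the inequality is vacuous there).
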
 

Physically, the classical position $x (t)$ of a quantum particle governed by $H_0(t)$ can be express as $x(t) = y_1(t) x_0 + y_2(t) p_0 /m $, where $x_0$, $p_0$ and $m$ are the initial position, the initial momentum and the mass of the quantum particle, respectively. Indeed, if $\sigma (t) \equiv 0$, i.e., the linear Schr\"{o}dinger operator, we have $y_1(t) = 1$ and $y_2(t) = t$, and get $x(t) = x_0 + t (p_0/m)$ is the uniform linear motion. Considering corresponding $\sigma (t)$'s, it appears some important physical models 
(see  \cite{IK-JMAA-2020}, \cite{K-JDE-2021}).

 We say $p > 1$ is the critical in the sense of $y_2(t)$, 
 if there exists $c_{+} >0$ such that
\begin{align}\label{eqn;critical}
\lim_{t \to \infty} t \left| y_2( t)\right|^{-\frac{n}{2}(p-1)} = c_{+}
\end{align}
and say the sub-critical in the sense of $y_2(t)$, if there exist $c_{+} >0$ and $\delta _{*} >0$ such that
\begin{align} 
\lim_{t \to \infty} t^{1- \delta _*}  |y_2(t) |^{-\frac{n}{2}(p-1) } 
 = c_{+}.
\label{eqn;sub-critical}
\end{align}
We also say the super-critical in the sense of $y_2(t)$, if there exist 
$c_{+} >0$ and $\delta^* >0$ such that
\begin{align} 
\lim_{t \to \infty} t^{1+\delta^*} \left| y_2(t) \right|^{-\frac{n}{2}(p-1) }  = c_{+}.
\label{eqn;super-critical}
\end{align}

%\begin{rem}
In the case where $\sigma (t) \equiv 0$, we have $y_1(t) = 1$ and $y_2(t) =t$,
i.e., $p=1+2/n$ is the critical. 
In the case where $\sigma (t) = \sigma _0 t^{-2}$, $\sigma _0 \in [0, 1/4)$,
we have that for $t \geq T_0$,  
the functions $ t^{\lambda}$ and $ t^{1- \lambda}$ solve 
$y''(t) + \sigma (t)y(t) = 0$, 
where $\lambda = (1- \sqrt{1-4 \sigma _0})/2 \in [0,1/2)$. 
Choosing the suitable $\sigma (t)$ in $t\in [0,T_0)$, see, e.g. \cite{{K-JDE-2021}}, 
we get $y_1(t) = c_1 t^{\lambda}$ 
and $y_2(t) = c_2 t^{1- \lambda}$
%$y_2(t) = c_2 t^{\lambda} + c_3 t^{1- \lambda}$, 
where $c_1, c_2 \in \R \backslash \{ 0 \}$. 
In this case, it holds that $\lim t|y_2(t)|^{-\frac{n}{2}(p-1)}= |c_2|^{-\frac{n}{2}(p-1)} $, which yields $p=1 + 2/(n(1- \lambda))$ is the critical. In the appendix, we introduce some other models.
%\end{rem}
We state that the asymptotic behavior of dissipative solutions to \eqref{eqn;dNLS} 
vary by  the situation of $p$,
namely, the critical exponent for the $L^2$-decay of dissipative solutions 
depends on the decay order of the time decaying potential $\sg$.

 \begin{thm}[Small data case]\label{thm;L^2-decay}
Let $1 \le n \le 3$, $n/2<s<p$, 
$\lam \in \mathbb{C}$ with $\rm{Im}\,\lam < 0$,
and $\sg$ be the real function such that there exists fundamental solutions $y_1,y_2$
of the equation \eqref{eqn;sg-equation} 
satisfying $(A)$--$(C)$.
Assume that $p>1$ is the critical,
namely $p$ satisfies \eqref{eqn;critical}.
Then, there exists small $\del_0>0$ such that
for any $u_0 \in H^{s,s}(\re^n)$ with 
$\|u_0\|_{H^{s,s}} \le \del_0$,
the Cauchy problem \eqref{eqn;dNLS}
has a unique global solution $u \in C([0,\infty);H^{s}(\re^n))$
such that $U_0(t,0)^{} |x|^s U_0(t,0)^{-1}  u \in C([0,\infty);L^2(\re^n))$
and 
\begin{align}
\sup_{t \ge 0} \|u(t)\|_{H^{\frac{s}{2}}} < \infty.
\label{eqn;unif-s/2}
\end{align}
Moreover, the solution satisfies following decay estimates: For some $T_0$ and 
any  $t \ge T_0$,
\begin{align} 
    &\|u(t)\|_{L^\infty} \le \,\,
          C  |y_2(t)|^{-\frac{n}{2}}
             \Big(
                   \int_{T_0}^t |y_2 (\t)| ^{-\frac{n}{2}(p-1)}\,d\t
             \Big)^{-\frac{1}{p-1}},
          \label{eqn;Point-decay}\\
  &\|u(t)\|_{L^2} \le \,\,
         C \Big(\int_{T_0}^t | y_2(\t) |^{-\frac{n}{2}(p-1)}\,d\t
             \Big)^{-\frac{s}{(p-1)(s+n)}},
        \label{eqn;L^2-decay} 
\end{align}
On the other hand, assume that $p$ is the super-critical, 
namely $p$ satisfies \eqref{eqn;super-critical}. 
Then, for any non-trivial $u_0 \in H^{s,s}(\re^n)$ with $\|u_0\|_{H^{s,s}} \le \del_0$, 
the corresponding solution to \eqref{eqn;dNLS} has a uniformly lower and upper bounds such that for any $t \ge T_0$,
\begin{align}
\label{eqn;Lower-bound}
    e^{-C_{T_0}|{\rm Im}\,\lam|\|u_0\|^p_{H^{s,s}}}\|u_0\|_{L^2}
   \leq \|u(t)\|_{L^2}  \leq \| u_0 \|_{L^2} , 
\end{align}
where $C_{T_0}>0$ is independent of $t \geq T_0$.
\end{thm}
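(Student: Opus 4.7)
The plan is to first establish global-in-time existence by a contraction argument in a weighted Sobolev space, and then to derive the decay and lower bound estimates via the generalized pseudo-conformal operator associated with $H_0(t)$, combined with the Korotyaev dispersive estimate and a Bernoulli-type ODE analysis.

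For local and then global well-posedness, I would rewrite the equation as
\begin{align*}
u(t) = U_0(t,0)u_0 - i\lambda \int_0^t U_0(t,\tau)(|u|^{p-1}u)(\tau)\,d\tau
\end{align*}
and run a contraction in a space controlled by the norm $\|u(t)\|_{L^2} + \|J(t)^s u(t)\|_{L^2}$, where $J(t) := U_0(t,0)\,|x|\,U_0(t,0)^{-1}$ is the natural analogue of the Dollard/Galilean generator for $H_0(t)$. Using the Korotyaev dispersive bound together with a $J(t)$-based Gagliardo--Nirenberg inequality of the form $\|v\|_{L^\infty} \le C|y_2(t)|^{-n/2}\|v\|_{L^2}^{1-\frac{n}{2s}}\|J(t)^s v\|_{L^2}^{\frac{n}{2s}}$ (valid since $n/2<s$) and the subcriticality $s<p$ for the nonlinearity derivative, I would close the a priori estimates uniformly in $t$ provided $\|u_0\|_{H^{s,s}}$ is small. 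This simultaneously gives \eqref{eqn;unif-s/2} and the boundedness of $U_0(t,0)|x|^s U_0(t,0)^{-1}u$ in $L^2$.

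For the pointwise decay \eqref{eqn;Point-decay} in the critical regime, the idea is to factorize $u$ through a lens-type transform adapted to $(y_1,y_2)$. Writing $u(t,x) = e^{i\frac{y_2'(t)}{2 y_2(t)}|x|^2}|y_2(t)|^{-n/2}(\mathcal{F}\phi)\!\left(t,x/y_2(t)\right)$, I expect an equation of the form
\begin{align*}
i\partial_t \phi(t,\xi) = \lambda |y_2(t)|^{-\frac{n}{2}(p-1)} |\phi|^{p-1}\phi + \mathcal{R}(t,\xi),
\end{align*}
where the remainder $\mathcal{R}$ is controlled by $|y_1(t)/y_2(t)|$ and by higher regularity of $\phi$. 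The pointwise Bernoulli analysis on the leading ODE yields
\begin{align*}
|\phi(t,\xi)|^{-(p-1)} - |\phi(T_0,\xi)|^{-(p-1)} \geq (p-1)|{\rm Im}\,\lambda|\int_{T_0}^t |y_2(\tau)|^{-\frac{n}{2}(p-1)}\,d\tau,
\end{align*}
which after undoing the factorization produces \eqref{eqn;Point-decay}. Interpolating this $L^\infty$ bound with the a priori uniform control on $\|J(t)^s u\|_{L^2}$ gives \eqref{eqn;L^2-decay}, the exponent $s/((p-1)(s+n))$ being the natural weight in the interpolation $\|u\|_{L^2}\le C\|u\|_{L^\infty}^{n/(s+n)}\|J(t)^s u\|_{L^2}^{s/(s+n)}$.

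For the super-critical lower bound, the hypothesis \eqref{eqn;super-critical} makes $\int_{T_0}^\infty |y_2(\tau)|^{-\frac{n}{2}(p-1)}\,d\tau$ finite. Combining this with the pointwise bound above and the uniform control of $\|J(t)^s u\|_{L^2}$ via interpolation gives $\int_{T_0}^\infty \|u(\tau)\|_{L^{p+1}}^{p+1}\,d\tau \le C_{T_0}\|u_0\|_{H^{s,s}}^{p+1}$, and substituting into the dissipative mass identity (which holds for \eqref{eqn;dNLS} because $U_0(t,s)$ is unitary on $L^2$) yields \eqref{eqn;Lower-bound}. The main obstacle is carrying out the lens-type factorization rigorously with only the qualitative assumptions (A)--(C) on the fundamental solutions $y_1,y_2$, rather than an explicit form of $\sigma(t)$; making the remainder $\mathcal{R}(t,\xi)$ integrable in $t$ is precisely where condition (C) on $y_1/y_2$ is used.
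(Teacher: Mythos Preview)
Your overall architecture (lens-type factorization, profile ODE with a Bernoulli structure, Korotyaev dispersive estimate, mass identity for the lower bound) matches the paper, and your remark that condition~(C) is precisely what makes the remainder $\mathcal{R}$ integrable is correct. However, the $L^2$-decay step \eqref{eqn;L^2-decay} has a genuine gap.

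You claim that the a priori contraction closes \emph{uniformly} in $t$ and in particular yields a uniform-in-$t$ bound on $\|J(t)^s u\|_{L^2}$. In the critical regime \eqref{eqn;critical} this is not achievable: the integral $\int_{T_0}^t |y_2(\tau)|^{-\frac{n}{2}(p-1)}\,d\tau$ diverges (roughly like $\log t$), and the standard energy estimate for $|J|^s(t)v$ only gives $\||J|^s(t)v\|_{L^2}\le C\langle Y(t)\rangle^{\epsilon_1}$ with a small but positive $\epsilon_1$ (this is exactly the role of the weight $\langle Y(t)\rangle^{-\epsilon_1}$ in the paper's $X_{T_1}$-norm). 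Consequently your interpolation $\|u\|_{L^2}\le C\|u\|_{L^\infty}^{n/(s+n)}\|J(t)^s u\|_{L^2}^{s/(s+n)}$ would not yield \eqref{eqn;L^2-decay}; and in fact that inequality fails even at the level of scaling (take $u\mapsto u(\lambda\,\cdot)$ and compare powers of $\lambda$).

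What the paper actually does, and what you are missing, is a \emph{separate} uniform bound on the half-order norm $\|v(t)\|_{\dot H^{s/2}}$ (equivalently on $\||\xi|^{s/2}\tilde v(t)\|_{L^2}$), obtained by multiplying the profile equation by $|\xi|^s\overline{\tilde v}$, using $\mathrm{Im}\,\lambda<0$ to drop the top-order term with a good sign, and then bounding the remainder in $L^2$ (Lemma~\ref{lem;R-decay}, estimate \eqref{eqn;R-decay2}). This dissipativity step is what produces \eqref{eqn;unif-s/2}. The $L^2$-decay then follows from a frequency splitting
\[
\|u(t)\|_{L^2}^2=\|\tilde v(t)\|_{L^2(|\xi|\le r)}^2+\|\tilde v(t)\|_{L^2(|\xi|>r)}^2
\le Cr^{n}\|\tilde v(t)\|_{L^\infty}^2+Cr^{-s}\|v(t)\|_{\dot H^{s/2}}^2,
\]
optimized in $r$, which gives the exponent $\frac{s}{(p-1)(s+n)}$. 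Note that the $L^\infty$ quantity entering here is the \emph{profile} $\tilde v=\mathcal F[U_Y(t)^{-1}v]$, not $u$ itself; this is why there is no stray $|y_2(t)|^{-n/2}$ in the final bound. Your Bernoulli analysis for the pointwise decay and the super-critical lower bound are essentially correct and align with the paper's argument.
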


Even for the case $\sigma(t) \equiv 0$, the uniform estimate \eqref{eqn;unif-s/2} extends the result \cite{O-S-NoDEA-2020} to higher dimensions $n = 2,3$ and fractional derivatives $s \in (1,2)$. 
The key of such extension is uniform $H^s$ estimate \eqref{eqn;unif-s/2} which enable us to deal with the high-frequency parts for higher dimensions (see, section4).
Besides our results includes more generalized models of $\sigma (t)$ and if $\sg(t)$ appear, then the critical exponent for $L^2$-decay of solutions is different depending on the time decay  order of $\sg(t)$. 
Hayashi-Li-Naumkin \cite{H-L-N-AMP-2016} and Hoshino \cite{H-JMP-2019} showed an analogous uniform bound of solutions to \eqref{eqn;dNLS-K1} with $\sg \equiv 0$ by imposing special condition \eqref{eqn;SD}.
Theorem \ref{thm;L^2-decay} relax the dissipative restriction \eqref{eqn;SD}
and associated estimates \eqref{eqn;unif-s/2} and 
\eqref{eqn;L^2-decay} hold by only assuming  Im $\lam<0$ even if $\sg(t)$ is appearance.

The pointwise estimate \eqref{eqn;Point-decay} 
corresponds to the decay estimate of the dissipative problem \eqref{eqn;dNLS-K1} under the critical or sub-critical setting.
%(see \cite{S-CPDE-2006}, \cite{K-S-JDE-2007}, \cite{K-S-JMSJ-2009}) 
Hence, \eqref{eqn;Point-decay}  extends the results 
\cite{S-CPDE-2006}, \cite{K-S-JDE-2007}  and  \cite{K-S-JMSJ-2009} 
to the problem \eqref{eqn;dNLS}
which contains the time decaying harmonic potential.

\vspace{2mm}

If we assume further that $\lam \in \mathbb{C}\setminus\{0\}$
satisfies \eqref{eqn;SD},
then the solution to \eqref{eqn;dNLS} exists globally in time 
without any restriction of the size of initial data
and its $L^2$-norm decays for all dimensions $n \ge 1$. 
In the followings, we let $n \ge 1$ and $p>1$ for $n=1,2$ 
and $1<p < \frac{n}{n-2}$, $n \geq 3$, and define 
$\gamma = \gamma (n)$ so that $\gamma (1) = \frac12$ 
and $\gamma (n) = 1$ for $n \geq 2$.

\begin{thm}[Large data case]\label{thm;L^2-decay2}

Let $\lam \in \mathbb{C} \setminus \{  0 \}$ satisfies \eqref{eqn;SD}
and $\sg$ be the real function such that there exists fundamental solutions $y_1,y_2$
of the equation \eqref{eqn;sg-equation} 
satisfying $(A)$--$(C)$.
Assume that $p$ is the critical or sub-critical, namely $p$ satisfies \eqref{eqn;critical} or \eqref{eqn;sub-critical}, and that $\delta _* < \frac{\delta \theta}{2}$ 
with some $0< \theta < \frac{n}{2}(1-p) + \gamma p$.
Then, for any $u_0 \in H^{1,1}(\re^n)$, 
the Cauchy problem \eqref{eqn;dNLS}
has a unique global solution $u \in C([0,\infty);H^{1,1}(\re^n))$
such that $U_0(t,0) |x| U_0(t,0)^{-1} u \in C([0,\infty);L^2(\re^n))$
and it satisfies for any $t \ge 0$,
\begin{align}
\label{eqn;Upper-bound}
    \|\N u(t)\|_{L^2} \le  C\|\N u_0\|_{L^2}, \,\,\,
    \|U_0(t,0) |x| U_0(t,0)^{-1}  u(t)\|_{L^2} \le C\||x|u_0\|_{L^2},
\end{align}
where $C>0$ is independent of $t \ge 0$.
Moreover, there exists $C>0$ such that for any $t \ge T_0$, 
if $p$ is subcritical, 
\begin{align}
\label{eqn;L^2-decay2}
   \|u(t)\|_{L^2} 
      \le C \max\left\{ t^{- \frac{2 \delta _* }{(p-1)(2+n)}} , 
              t^{\delta _* - \frac{\delta \theta}{2}} \right\},
\end{align}
where $\del_*$ is given by \eqref{eqn;sub-critical} and if $p$ is critical, 
\begin{align*}
\|u(t)\|_{L^2}  \le C \left( \int_{T_0}^t |y_2(\t)|^{- \frac{n}{2} (p-1) } d \t \right)^{- \frac{2}{(p-1)(2+n) }}.
\end{align*}
\end{thm}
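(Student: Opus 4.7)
The plan is to combine a generator/commutator argument with the strong dissipative condition \eqref{eqn;SD} to extract uniform Sobolev-type bounds, then feed those bounds into Korotyaev's dispersive estimate and a Gagliardo--Nirenberg-type interpolation to close a differential inequality for $\|u(t)\|_{L^2}^{2}$. Throughout, the operator $J(t):=U_0(t,0)|x|U_0(t,0)^{-1}$ plays the role that $|x|$ plays for the free equation: differentiating $U_0(t,0)^{-1} u(t)$ and using \eqref{eqn;dNLS} gives the intertwining identity $(i\partial_t-H_0(t))J(t)u = J(t)(\lambda |u|^{p-1}u)$ along solutions, which lets $J(t)$ commute through Duhamel's formula.

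I would first build the local solution in $C([0,T];H^1(\R^n))$ with $J(t)u\in C([0,T];L^{2})$ by a standard contraction in a Strichartz norm derived from Korotyaev's estimate and the $TT^*$ method (cf.~Carles \cite{Ca}). For the a priori bound on $\|J(t)u\|_{L^2}$, the identity
\[
\tfrac{d}{dt}\|J(t)u\|_{L^2}^2 = 2\,\mathrm{Im}\,\bigl\langle \lambda\,J(t)(|u|^{p-1}u),\, J(t)u\bigr\rangle
\]
is the starting point. Expanding $J(t)(|u|^{p-1}u)=\tfrac{p+1}{2}|u|^{p-1}J(t)u-\tfrac{p-1}{2}|u|^{p-3}u^{2}\,\overline{J(t)u}$ and pairing against $J(t)u$ produces, after writing $u=re^{i\theta}$, a quadratic form in $(|\nabla r|,r|\nabla\theta|)$-type quantities whose trace $(p+1)\mathrm{Im}\,\lambda$ is negative and whose determinant $p(\mathrm{Im}\,\lambda)^2 - \tfrac{(p-1)^2}{4}(\mathrm{Re}\,\lambda)^2$ is non-negative precisely when \eqref{eqn;SD} holds; hence $\|J(t)u(t)\|_{L^2}\leq\||x|u_0\|_{L^2}$. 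The same scheme with $\nabla$ in place of $J(t)$ picks up the commutator $[H_0(t),\nabla]u=\sigma(t)\,xu$, which I would absorb by a coupled energy estimate, using the Wronskian $y_1 y_2' - y_1' y_2 = 1$ together with (A) and \eqref{K10} to express $\|xu\|_{L^2}$ in terms of $\|J(t)u\|_{L^2}$ and $\|\nabla u\|_{L^2}$ for $t\geq T_0$; this closes both halves of \eqref{eqn;Upper-bound}, and global existence follows from blow-up alternative.

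For the $L^2$-decay I would combine the uniform bound on $\|J(t)u\|_{L^2}$ with Korotyaev's estimate through a generalized Gagliardo--Nirenberg inequality (the factor $\gamma(n)$ accounting for the one-dimensional Sobolev endpoint where $H^1\not\hookrightarrow L^\infty$) to obtain the pointwise bound $\|u(t)\|_{L^\infty}\lesssim |y_2(t)|^{-n\gamma/2}$. Next, a lens-type change of variables $v(\tau,y)=|y_2(t)|^{n/2}e^{i\Phi(t,y)}u(t,y_2(t)y)$, with $\tau$ and $\Phi$ chosen so that $v$ solves a damped NLS $i\partial_\tau v+\tfrac12\Delta_y v = \lambda\,\eta(\tau)|v|^{p-1}v$ with $\eta(\tau)\sim|y_2(t(\tau))|^{-n(p-1)/2}$, transports the $\|J(t)u\|$-bound into uniform $H^1$-control on $v$. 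A Bernoulli-type integration of the dissipative identity $\tfrac{d}{d\tau}\|v\|_{L^2}^2=-2|\mathrm{Im}\,\lambda|\,\eta(\tau)\|v\|_{L^{p+1}}^{p+1}$, combined with Gagliardo--Nirenberg to convert the nonlinear dissipation into a power of $\|v\|_{L^2}$, yields $\|v(\tau)\|_{L^2}^2\leq C\bigl(\int_{T_0}^\tau \eta\bigr)^{-2/((p-1)(2+n))}$; unwinding the substitution and using \eqref{eqn;critical} or \eqref{eqn;sub-critical} gives the stated rates. The secondary term $t^{\delta_*-\delta\theta/2}$ in \eqref{eqn;L^2-decay2} arises from the residual $y_1/y_2$ correction in \eqref{K10} that the lens substitution cannot absorb; the hypothesis $\delta_*<\delta\theta/2$ is exactly what keeps it subdominant.

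I expect the main obstacle to be the $\|\nabla u\|$ half of \eqref{eqn;Upper-bound}: the commutator $[H_0(t),\nabla]=\sigma(t)x$ does not vanish, and since $\sigma(t)$ is prescribed only indirectly through $y_j$, the natural linear energy $\|\nabla u\|^{2}+\sigma(t)\||x|u\|^{2}$ is not monotone, so the $(SD)$-cancellation on the nonlinear side has to be played against a non-autonomous linear term. Closing the coupled estimate uniformly under (A)--(C) is what forces the compatibility assumptions $\delta_*<\delta\theta/2$ and $0<\theta<\tfrac{n}{2}(1-p)+\gamma p$, and these same conditions determine which of the two rates in the $\max$ of \eqref{eqn;L^2-decay2} dominates.
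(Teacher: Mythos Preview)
Your $L^{2}$-decay argument has a genuine gap at the Bernoulli step. From $\frac{d}{d\tau}\|v\|_{L^2}^{2} = -2|\mathrm{Im}\,\lambda|\,\eta(\tau)\|v\|_{L^{p+1}}^{p+1}$ you need a \emph{lower} bound $\|v\|_{L^{p+1}}^{p+1}\ge c\|v\|_{L^2}^{\beta}$, but Gagliardo--Nirenberg only gives the upper bound $\|v\|_{L^{p+1}} \le C \|v\|_{L^2}^{1-a}\|\nabla v\|_{L^2}^{a}$; substituting that produces $\frac{d}{d\tau}\|v\|_{L^2}^{2}\ge -C\eta\,\|v\|_{L^2}^{(1-a)(p+1)}$, a lower bound on the mass, not decay. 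No interpolation built from the uniform $H^{1}$ and $J$ bounds reverses this inequality without $L^{q}$ control for some $q<2$, which you do not have. The paper instead passes to the Fourier profile $\tilde v=\F[U_Y(t)^{-1}v]$ and derives the pointwise-in-$\xi$ identity $\tfrac12\partial_t|\tilde v|^{2}=-|\mathrm{Im}\,\lambda|\,|y_{2}|^{-n(p-1)/2}|\tilde v|^{p+1}+\mathrm{Im}(R\,\overline{\tilde v})$. After integrating in $\xi$ one splits $\|\tilde v\|_{L^2}^{2}$ into $\{|\xi|<r\}$ and $\{|\xi|\ge r\}$: on the low piece H\"older on the ball gives $\|\tilde v\|_{L^{2}(|\xi|<r)}^{2}\le Cr^{n(p-1)/(p+1)}\|\tilde v\|_{L^{p+1}}^{2}$, which after Young is absorbed by the dissipation term, while the high piece is $\le r^{-2}\|\nabla v\|_{L^{2}}^{2}$. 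Optimising $r$ and integrating the weighted quantity $Y_{2}(t)^{\alpha+1}\|\tilde v\|_{L^{2}}^{2}$ then yields the two competing rates in \eqref{eqn;L^2-decay2}.

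Your account of where $\theta$ and the condition $\delta_{*}<\delta\theta/2$ enter is also off: they do not come from closing the $\|\nabla u\|$ estimate, nor from a lens-transform residual. The parameter $\theta$ enters only through Lemma~\ref{lem;R-decay}, which bounds $\|R(t)\|_{L^{2}}\le C|y_{2}|^{-n(p-1)/2}|Y(t)|^{-\theta/2}$ via $|e^{-i|x|^{2}/2Y}-1|\le C(|x|^{2}/|Y|)^{\theta/2}$ together with the fractional chain rule and Sobolev embedding; the range $0<\theta<\tfrac n2(1-p)+\gamma p$ is precisely what makes those exponents admissible. The condition $\delta_{*}<\delta\theta/2$ then appears when one integrates the remainder contribution $Y_{2}^{\alpha+1}|y_{2}|^{-n(p-1)/2}|Y|^{-\theta/2}$ in $t$ using \eqref{K10}. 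As for \eqref{eqn;Upper-bound}, the paper sidesteps the commutator $[H_{0}(t),\nabla]=\sigma(t)x$ not by a coupled energy--Wronskian estimate but by transforming to $v$ via Lemma~\ref{lem;u-v} for $t\ge T_{0}$, where the SD quadratic-form argument applies cleanly to both $\nabla v$ and $J(t)v$ simultaneously; on $[0,T_{0}]$ one simply iterates a short-time energy bound with step $T'$ satisfying $T'\sup|\sigma|<1/64$ and accepts the finite constant $4^{T_{0}/T'}$.
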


Now we consider the case where $\sigma (t) \equiv 0$. Then we see that $\delta = 1$ and $\delta _* = 1- \frac{n}{2} (p-1)$ since $y_1(t) = 1$ and $y_2(t) =t$. In that case, letting
\begin{align} \label{eqn;p(n)}
p(n)= 
\left\{
\begin{aligned}
&\frac{3+\sqrt{n^2+2n+9}}{n+2}, && n \ge 2,\\
&1+\sqrt{2},  &&n=1,
\end{aligned}
\right.
\end{align}
Hayashi-Li-Naumkin \cite{H-L-N-AMP-2016} found the decay estimate
such that for $p(n) < p < 1 + \frac{2}{n}$, 
\begin{align*}
 \|u(t)\|_{L^2}  \le C t^{- \frac{2 \delta _* }{(p-1)(2+n)}}, \quad  \left( \frac{2 \delta _* }{(p-1)(2+n)} = \left( \frac{1}{p-1} - \frac{n}{2} \right) \frac{2}{n+2} \right)
\end{align*}
and as far as we know, the case where $p < p(n)$ has not been known yet. 
Here in theorem \ref{thm;L^2-decay2} with $\sigma \equiv 0$, 
the necessary condition for $L^2$-decay is $\delta _* < \frac{\delta \theta}{2}$ and, together with  
$\delta = 1$ and $\delta _* = 1- \frac{n}{2} (p-1)$, 
that yields the lower bounds for $p$ such as $p>p_{**}(n)$, where $p_{**}(n)$ is given by
\begin{align}\label{eqn;p_{**}}
p_{**}(n) =
\begin{cases}
      2, & n=1, \\ 
      1, & n=2,\\
 \frac{4+n}{2+n}, & n \geq 3. 
\end{cases}
\end{align}
We notice that 
$p_{**}(n) < p(n)$ for $n=1,2,$ and also 
$\frac{4+n}{2+n} = \frac{3 + \sqrt{n^2+2n +1}}{2+n} <p(n)$
for $n \geq 3$. Moreover, setting
\begin{align} \label{eqn;p_*}
p_*(n) \equiv \frac{ n^2 + 3n + 6 + \sqrt{9n^2 + 28n + 36} }{(n+2)^2}, 
\end{align}
we find that $p_{**}(n) < p_*(n) < p(n)$. 
For $p_*(n) \leq p < p(n)$, we have 
$ t^{\delta _* - \frac{\delta \theta}{2}} \leq t^{- \frac{2 \delta _* }{(p-1)(2+n)}} $, which implies a natural extension of \cite{H-L-N-AMP-2016} in the sense of lower bounds for $p$;
\begin{thm}
Let $n \geq 1$, $\sigma (t) \equiv 0$ and that $p(n)$, $p_*(n)$ and $p_{**}(n)$ are equivalent to that of \eqref{eqn;p(n)}, \eqref{eqn;p_*} and \eqref{eqn;p_{**}}, respectively. 
Suppose that all conditions in Theorem \ref{thm;L^2-decay2} with $\sigma (t) \equiv 0$ are fulfilled. Then  the following decay estimate holds: 
\begin{align}\label{eqn;decay-0}
 \|u(t)\|_{L^2}  \le 
 \begin{cases}
 C t^{- \frac{2 \delta _* }{(p-1)(2+n)}}, & p_*(n) \leq p < 1 + \frac{2}{n}, \\ 
 C t^{\delta _* - \frac{\delta \theta}{2}}, & p_{**}(n)  < p < p_*(n).
 \end{cases}
\end{align}
\end{thm}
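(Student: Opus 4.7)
The plan is to apply Theorem~\ref{thm;L^2-decay2} with $\sigma \equiv 0$ and then compare the two terms appearing in the $\max$ on the right-hand side of \eqref{eqn;L^2-decay2}. Under $\sigma \equiv 0$ one has $y_1(t) \equiv 1$ and $y_2(t) = t$, so condition (C) holds with $\delta = 1$, and the subcritical identity \eqref{eqn;sub-critical} forces $\delta_* = 1 - \tfrac{n}{2}(p-1) > 0$ throughout $1 < p < 1 + 2/n$. The hypothesis $p > p_{**}(n)$ is precisely the condition under which the interval of admissible $\theta$,
\begin{align*}
2\delta_* < \theta < \theta^* := \tfrac{n}{2}(1-p) + \gamma p,
\end{align*}
is non-empty, so Theorem~\ref{thm;L^2-decay2} applies and yields
\begin{align*}
\|u(t)\|_{L^2} \le C(\theta)\max\bigl\{t^{-A},\, t^{-B(\theta)}\bigr\}, \qquad A := \frac{2\delta_*}{(p-1)(n+2)}, \quad B(\theta) := \frac{\theta}{2} - \delta_*.
\end{align*}

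To obtain the strongest possible decay from the second term, I would let $\theta$ approach $\theta^*$ and work with the supremum $B_{\max} := \theta^*/2 - \delta_*$. The crux of the argument is then the purely algebraic identification of the crossover point at which $A = B_{\max}$. Writing $q := p - 1$ and clearing denominators (using $\gamma = 1$ in dimensions $n \ge 2$), the equation $A = B_{\max}$ collapses to the quadratic
\begin{align*}
(n+2)^2\, q^2 + (2n-4)\, q - 8 = 0,
\end{align*}
whose unique positive root is $q = \bigl((2-n) + \sqrt{9n^2 + 28n + 36}\bigr)/(n+2)^2$; adding $1$ recovers exactly the definition \eqref{eqn;p_*} of $p_*(n)$.

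A direct monotonicity check shows that $A(p)$ is strictly decreasing and $B_{\max}(p)$ is strictly increasing on $(1, 1 + 2/n)$, so $A - B_{\max}$ changes sign exactly once, at $p = p_*(n)$. Hence for $p_*(n) \le p < 1 + 2/n$ one has $A \le B_{\max}$; choosing an admissible $\theta$ close enough to $\theta^*$ ensures $B(\theta) \ge A$ (at the cost of letting $C$ depend on $\theta$), the maximum collapses to $t^{-A}$, and the first case of \eqref{eqn;decay-0} follows. For $p_{**}(n) < p < p_*(n)$, every admissible $\theta$ satisfies $B(\theta) \le B_{\max} < A$, so the maximum equals $t^{-B(\theta)}$, which is the second case of \eqref{eqn;decay-0}. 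I expect no analytic obstacle beyond what is already built into Theorem~\ref{thm;L^2-decay2}; the only nontrivial ingredient is the quadratic above, which pins down $p_*(n)$ as the precise threshold between the two decay regimes.
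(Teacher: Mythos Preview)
Your proposal is correct and follows essentially the same route as the paper: the theorem is stated there as an immediate corollary of Theorem~\ref{thm;L^2-decay2}, with the surrounding discussion noting that for $\sigma\equiv 0$ one has $\delta=1$, $\delta_*=1-\tfrac{n}{2}(p-1)$, that $p>p_{**}(n)$ is exactly the admissibility condition $\delta_*<\tfrac{\delta\theta}{2}$, and that the comparison $t^{\delta_*-\delta\theta/2}\le t^{-2\delta_*/((p-1)(2+n))}$ switches precisely at $p_*(n)$. Your write-up is in fact more explicit than the paper's, supplying the quadratic in $q=p-1$ and the monotonicity check that pin down $p_*(n)$ as the unique crossover.
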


The decay estimate \eqref{eqn;decay-0}
contains the result \cite{H-L-N-AMP-2016} for $p>p(n)$ and also implies that
the dissipative solution to \eqref{eqn;dNLS-K1} shows the same $L^2$-decay
in \cite{H-L-N-AMP-2016} for $p_*(n) \le p \le p(n)$.
Even if $p_{**}(n) < p \le p_*(n)$, dissipative solutions show the $L^2$-decay
and $p_{**}(n)$ is appeared by estimating  approximate solutions to \eqref{eqn;dNLS-K1}
as $t \to \infty$. 
Namely, the estimate \eqref{eqn;decay-0} is improvement for the lower bound of $p$ to exhibit the $L^2$-decay of solutions to \eqref{eqn;dNLS-K1} under the sub-critical case.

\vspace{2mm}
Due to Theorem \ref{thm;L^2-decay}
and Theorem \ref{thm;L^2-decay2},
the integrability of $|y_2(t)|^{-n(p-1)/2}$ give a criterion for the asymptotic behavior of dissipative solutions to \eqref{eqn;dNLS}. 

%
%　　　　　　　STRATEGY OF PROOF
%
The proof of Theorem relies on the reduction of \eqref{eqn;dNLS} to the following problem:
\begin{equation}\label{eqn;NLSv}
\left\{
\begin{split}
& i \pt_t v + \frac{1}{2y_1(t)^2}\Del v 
          = \lambda |y_1(t)|^{-\frac{n(p-1)}{2}}|v|^{p-1}v ,   
     & t >T_0,\, x \in \re^n,\\
&  v(T_0,x)  =  v_0, 
     & x \in \re^n,
\end{split}
\right.
\end{equation}
where $v_0=e^{-iy_1(T_0)y_1'(T_0)|x|^2/2}e^{i\log |y_1(T_0)|A}u_0(x)$, $A = (x \cdot (-i \nabla) + (-i \nabla) \cdot x)/2$ is the generator of the dilation group. 
In virtue of this transformation,
one enable to use the decomposition of the associated semigroup to observe the asymptotic behavior of dissipative solutions.      
Namely,  the solution to \eqref{eqn;NLSv} 
is well approximated by 
\begin{align}
\label{eqn;appro}
  \F [U_Y(t)^{-1}v](t,\xi), \quad U_Y(t) = e^{i \frac{y_2(t)}{2y_1(t)} \Delta}
\end{align}
for large $t$ and any $\xi \in \re^n$. Here $Y(t)=y_2(t)/2y_1(t)$
and the approximated solution \eqref{eqn;appro}
satisfies
\begin{align}
\label{eqn;pro-sol}
   \frac{1}{2}\pt_t 
        \Big|\F [U_Y(t)^{-1}v](\xi)\Big|^2
           =& -\frac{|\text{Im}\, \lam|}{|y_2(t)|^{\frac{n(p-1)}{2}}}
                          \Big|\F [U_Y(t)^{-1}v](\xi)\Big|^{p+1}
      \\ \nn           &\,\,+O(|y_2(t)|^{-\frac{n(p-1)}{2}}Y(t)^{-\frac{\th}{2}}),
       \quad  t \to \infty,
\end{align}
for $0<\th<1$
(cf. \cite{H-L-N-CPAA-2017}, \cite{K-L-S-DIE-2014}, \cite{K-AA-2016}). If a top term (first term) is dominant compared with a remainder term (second term)
in \eqref{eqn;pro-sol}, 
by solving \eqref{eqn;pro-sol}, we formally see that
\begin{align}
\label{eqn;approxi-sol}
  \Big|\F [U_Y(t) ^{-1}v](\xi)\Big|^2
       \simeq 
         \bigg(  
             \frac{|\F [U_Y(T_0)^{-1}v](\xi)|^{p-1}}
                 {1+|{\rm Im}\,\lam |
                 |\F [U_Y(T_0)^{-1}v](\xi)|^{p-1} 
               \int_{T_0}^t |y_2(\tau)|^{-\frac{n}{2}(p-1)}\,d\tau}
         \bigg)^{\frac{2}{p-1}}
\end{align}
as $t \to \infty$. If the remainder term is dominant, we does not have the expression \eqref{eqn;approxi-sol}.
However one can obtain the $L^2$-decay property of dissipative solutions by extracting a time decay from the remainder term.

If $|y_2(t)|^{-\frac{n}{2}(p-1)}$ 
in the denominator of the right side of \eqref{eqn;approxi-sol} 
is integrable,
the approximated solution \eqref{eqn;appro} 
does not decay and hence the solution itself,
while the solution decays if $ |y_2(t) |^{-\frac{n}{2}(p-1)}$ is not integrable 
with $t=\infty$.
%We remark that it is needed  a restriction of the size of initial data to control the remainder term appeared in \eqref{eqn;pro-sol}.

\section{Preliminaries}
We start to compare the nonlinear powers $p(n)$ and $p_*(n)$ 
defined by \eqref{eqn;p(n)} and \eqref{eqn;p_*}.

\begin{lem}\label{lem;compare}
For $n \ge 1$, let $p(n)$ and $p_*(n)$ be defined by \eqref{eqn;p(n)} and \eqref{eqn;p_*}, respectively.
Then $p(n)>p_*(n)$.
\end{lem}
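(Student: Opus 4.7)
The plan is to reduce the claimed inequality to a polynomial inequality by clearing the denominators and squaring twice, handling $n=1$ as a separate case because $p(n)$ is defined piecewise.

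First I would treat $n=1$ by direct comparison. One has $p(1)=1+\sqrt 2$ and $p_*(1)=(10+\sqrt{73})/9$, so after multiplying by $9$ the claim $p(1)>p_*(1)$ becomes $9\sqrt 2-1>\sqrt{73}$. Since $9\sqrt 2>1$, both sides are positive and squaring gives $163-18\sqrt 2>73$, i.e. $5>\sqrt 2$, which is clear.

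For $n\ge 2$, I would multiply through by $(n+2)^2$ to obtain the equivalent statement
\begin{equation*}
(n+2)\sqrt{n^2+2n+9}\;>\;n^2+\sqrt{9n^2+28n+36}.
\end{equation*}
Set $A=(n+2)\sqrt{n^2+2n+9}$ and $B=\sqrt{9n^2+28n+36}$. Both sides being positive for $n\ge 1$, a first squaring shows it suffices to prove $A^2-B^2>n^4+2n^2B$. A direct expansion gives $A^2-B^2=n^4+6n^3+12n^2+16n$, so the inequality reduces to $6n^3+12n^2+16n>2n^2 B$, i.e.
\begin{equation*}
3n^2+6n+8\;>\;n\sqrt{9n^2+28n+36}.
\end{equation*}
Both sides are positive, so a second squaring yields the polynomial inequality
\begin{equation*}
(3n^2+6n+8)^2-n^2(9n^2+28n+36)\;>\;0,
\end{equation*}
and expansion collapses the left hand side to $8(n+2)^3$, which is manifestly positive.

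No step is really an obstacle: the entire argument is bookkeeping around two successive squarings, and the cleanness of the identity $A^2-B^2=n(n+2)(n^2+4n+8)$ (equivalently the factorization $(3n^2+6n+8)^2-n^2(9n^2+28n+36)=8(n+2)^3$) is what makes the method work. The only thing to watch is verifying that both sides are nonnegative before each squaring so that the implications are reversible; this is immediate for $n\ge 1$ in both squarings.
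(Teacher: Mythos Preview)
Your proof is correct. The algebra checks out in both the $n=1$ case and the general $n\ge 2$ case; in particular the identity $(3n^2+6n+8)^2-n^2(9n^2+28n+36)=8(n+2)^3$ is correct, and the positivity needed before each squaring is immediate.

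Your route is genuinely different from the paper's. The paper verifies $n=3,4$ by direct numerical comparison and then, for $n\ge5$, bounds $\sqrt{9n^2+28n+36}=3n\sqrt{1+\tfrac{28}{9n}+\tfrac{4}{n^2}}$ from above via the Bernoulli-type inequality $\sqrt{1+x}\le 1+\tfrac{x}{2}$, reducing the claim to a polynomial comparison that only works once $n$ is moderately large. Your argument is more self-contained: two squarings reduce the full range $n\ge 2$ to the single clean factorization $8(n+2)^3>0$, with no asymptotic inequality and no case split beyond the one forced by the piecewise definition of $p(n)$. The payoff is a uniform proof that also covers $n=2$ explicitly (which the paper's written proof skips), at the cost of slightly more algebraic bookkeeping.
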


\begin{proof}[Proof of Lemma \ref{lem;compare}]
In the case where $n=3,4$, they are calculated as
\begin{align*}
p(3) = \frac{3 + \sqrt{24}}{5} >  \frac{24 + \sqrt{201}}{25} = p_{\ast}(3) , \quad p(4) = \frac{3 + \sqrt{33}}{6} > \frac{34 + \sqrt{252}}{36} =p_{\ast} (4) .
\end{align*}
As for $n \geq 5$, by employing the Bernoulli inequality; 
$\sqrt{1+x} \le 1+\frac{1}{2}x$ for any $x \ge 0$, we compute
\begin{align*}
  (n+2)^2 (p(n) - p_{\ast}(n)) &= (n+2) \sqrt{n^2 +2n +9} - n^2 -3n \sqrt{1 + \frac{28}{9n} + \frac{4}{n^2}}
  \\ & \geq (n+2) \sqrt{n^2 +2n +9} - \left( n^2 + 3n + \frac{14}3 + \frac{6}{n} \right). 
\end{align*}
For $n \geq 5$, we have that $n^2 + 3n + \frac{14}3 + \frac{6}{n} \leq n^2 + 3n +6$ and which implies 
\begin{align*}
 (n+2)^2 (p(n) - p_{\ast}(n)) \geq \sqrt{(n^2 + 3n +6)^2 + 8n } - (n^2 + 3n +6) > 0.
\end{align*}
Thus we show $p_*(n) < p(n)$.
\end{proof}

Before considering the  $L^2$-decay, it is necessary to show the existence of global-in-time solutions to \eqref{eqn;dNLS} and hence we first divide the equation \eqref{eqn;dNLS} into two parts for the time interval; 
in the case where $t \in [0, T_1)$, $T_1 > 2T_0$, we obtain the time local solution to \eqref{eqn;dNLS} 
via the approach due to Kawamoto-Muramatsu \cite{KM-JEE-2021}.
\begin{prop}[\cite{KM-JEE-2021}]\label{p1}
Let $ n/2< s <p$. Then for any $T_1$,
there exists $\ep  _{0,0} = \ep_{0,0} (T_1) > 0$ such that for any $u_0$ satisfying 
$\| u_0 \|_{H^{s,s}} = \ep_{0,0} $, 
there uniquely exists a solution to \eqref{eqn;dNLS} and $0< \ep_0 \ll 1$ such that 
$u \in C([0,T_1]; H^{s,s})$ and 
\begin{align*}
\sup_{t \in [0,T_1]}\left\| u(t, \cdot ) \right\|_{H^{s,s}} 
   \le \ep_0 \| u_0 \|_{H^{s,s}}.
\end{align*} 
\end{prop}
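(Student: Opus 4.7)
The plan is to run a standard contraction mapping argument for the Duhamel reformulation
\begin{align*}
u(t) = U_0(t,0)u_0 - i\lambda \int_0^t U_0(t,\tau)|u(\tau)|^{p-1}u(\tau)\,d\tau
\end{align*}
in the closed ball $X_{T_1}(R) = \{u \in C([0,T_1]; H^{s,s}(\re^n)) : \sup_{[0,T_1]}\|u\|_{H^{s,s}} \le R\}$ equipped with the natural metric, where $R$ and $\ep_{0,0}(T_1)$ will be tuned at the end of the argument.

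The first step is to establish that $U_0(t,\tau)$ is uniformly bounded on $H^{s,s}(\re^n)$ for $t,\tau\in[0,T_1]$. The Heisenberg-type conjugation identities
\begin{align*}
U_0(t,0)^{-1}\,x\,U_0(t,0) &= y_1(t)x + y_2(t)(-i\N),\\
U_0(t,0)^{-1}(-i\N)U_0(t,0) &= y_1'(t)x + y_2'(t)(-i\N),
\end{align*}
which follow from \eqref{eqn;sg-equation} and the Heisenberg equation of motion, together with the continuity of $y_j,y_j'$ on $[0,T_1]$ furnished by assumption (B), yield the bound for integer $s$ by direct computation; fractional $s\in(n/2,p)$ is then handled by complex interpolation between $L^2 = H^{0,0}$ and $H^{k,k}$ for an integer $k \ge s$.

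For the nonlinear term, the assumption $s>n/2$ provides the Sobolev embedding $H^s \hookrightarrow L^\infty$, while $s<p$ enables the standard fractional Moser-type estimate
\begin{align*}
\||u|^{p-1}u\|_{H^{s,s}} \le C\|u\|_{L^\infty}^{p-1}\|u\|_{H^{s,s}} \le C\|u\|_{H^{s,s}}^p,
\end{align*}
with a parallel bound for the difference $|u|^{p-1}u - |v|^{p-1}v$. Combining the linear and nonlinear estimates one obtains
\begin{align*}
\|\Phi(u)\|_{X_{T_1}} \le C(T_1)\|u_0\|_{H^{s,s}} + C(T_1)T_1\|u\|_{X_{T_1}}^p
\end{align*}
and a matching Lipschitz bound for $\Phi(u)-\Phi(v)$. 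Setting $R = 2C(T_1)\|u_0\|_{H^{s,s}}$ and choosing $\ep_{0,0}(T_1)$ small enough that $C(T_1) T_1 R^{p-1} < 1/4$ makes $\Phi$ a strict contraction on $X_{T_1}(R)$, and the Banach fixed point theorem produces the unique solution satisfying the claimed bound, with $\ep_0$ an absolute multiple of $C(T_1)$.

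The main obstacle is the first step: uniform boundedness of $U_0$ on the fractional weighted space $H^{s,s}$. The conjugation identities handle integer orders immediately, but extending to non-integer $s$ requires control of commutators between the fractional operators $\J{\N}^s$, $\J{x}^s$ and the time-dependent quadratic Hamiltonian $H_0(t)$, which is essentially the content of \cite{KM-JEE-2021}. Because $T_1$ is fixed and assumption (B) gives finite $\max_{[0,T_1]}|y_j|$ and $\max_{[0,T_1]}|y_j'|$, the whole argument is purely local in time and no Strichartz machinery is needed.
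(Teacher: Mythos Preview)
The paper does not give its own proof of this proposition; it is simply quoted from Kawamoto--Muramatsu \cite{KM-JEE-2021} as an input for the subsequent arguments. So there is no ``paper's proof'' to compare against, and your sketch is in effect a reconstruction of what the cited reference supplies.

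Your outline is the right one: a contraction mapping on a small ball in $C([0,T_1];H^{s,s})$, with the two ingredients being (i) uniform $H^{s,s}$-boundedness of $U_0(t,\tau)$ on compact time intervals, and (ii) the product/Moser estimate $\||u|^{p-1}u\|_{H^{s,s}}\le C\|u\|_{H^{s,s}}^p$ for $n/2<s<p$. You correctly flag that (i) for non-integer $s$ is the substantive point, obtained by interpolating the integer-order conjugation identities you wrote down, and that this is precisely what \cite{KM-JEE-2021} provides. Your remark that Strichartz estimates are unnecessary here is accurate: because $s>n/2$ gives $H^s\hookrightarrow L^\infty$, the algebra structure of $H^s$ suffices for a purely energy-type fixed-point argument on $[0,T_1]$.

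Two small cautions. First, assumption (B) in the paper only asserts continuity of $y_j,y_j'$ for $t\ge T_0$; on $[0,T_0)$ one relies instead on the ODE \eqref{eqn;sg-equation} with the implicit regularity of $\sigma$ to conclude that $y_j,y_j'$ remain bounded there, so your invocation of (B) for the whole of $[0,T_1]$ is slightly imprecise. Second, the Lipschitz bound for $|u|^{p-1}u-|v|^{p-1}v$ in $H^s$ is delicate when $p$ is not an integer; the cleanest way (and the one typically used in the cited references) is to run the contraction in the weaker $C_tL^2$ metric while keeping the ball in $C_tH^{s,s}$, rather than claiming a full $H^{s,s}$ Lipschitz estimate. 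Neither point affects the correctness of the overall scheme.
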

On the other hand, the case where $t \in [T_1, \infty)$, to consider the mass decay property and the existence of the global-in-time solution at the same time, we reduce the equation by using unitary transform. Thanks to this reduction, one enable to use the approaches in some previous works. By the sub-consequence of the decomposition scheme by Korotyaev \cite{Ko}, we have the following;

\begin{lem}\label{lem;u-v}
Let $T_0>0$, $u(t,x) $ and $y_0(t)$ be a solution to \eqref{eqn;dNLS} and a solution to $y''(t) + \sigma (t) y(t) =0$, respectively. Assume that there exist $c>0$ and $T_0 \geq 0$ such that $|y_0(t)| \geq c$ and $y_0(t)$, $y_0 '(t)$ are continuous for all $t \geq T_0$. Define $v(t,x)$ as 
\begin{align*}
v(t,x) = e^{-i y_0 (t) y_0'(t) |x|^2 /2 } e^{i ( \log |y_0(t)| A )} u(t,x) , 
\qquad A = -\frac{i}{2} \left( x \cdot \nabla + \nabla \cdot x \right)
\end{align*} 
with the unitary operators $e^{-i y_0 (t) y_0'(t) |x|^2 /2 }$
and $ e^{i ( \log |y_0(t)| A )}$ on $L^2(\R ^n)$. Then $v(t,x) $ satisfies 
\eqref{eqn;NLSv} with 
$v_0=e^{-iy_0(T_0)y_0'(T_0)|x|^2/2}e^{i\log |y_0(T_0)|A}u_0(x)$.
Moreover, the mass conserves  
\begin{align} \label{1}
  \|u(t)\|_{L^2}=\|v(t)\|_{L^2}, \quad t \ge T_0
\end{align}
and it follows the $L^\infty$-identity:
\begin{align}\label{2}
  \left\| u(t) \right\|_{L^\infty} 
       = |y_0(t)|^{-\frac{n}{2}}  \left\| v(t) \right\|_{L^\infty},
  \quad t \ge T_0.
\end{align} 
\end{lem}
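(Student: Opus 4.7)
First, both $M(t):=e^{-iy_0(t)y_0'(t)|x|^2/2}$ and $D(t):=e^{i(\log|y_0(t)|)A}$ are unitary on $L^2(\re^n)$, so \eqref{1} is immediate. Moreover $|M(t)|\equiv 1$, while the dilation $D(t)$ acts explicitly as $(D(t)f)(x)=|y_0(t)|^{n/2}f(|y_0(t)|x)$ (using $A=-ix\cdot\N - in/2$), so $\|v(t)\|_{L^\infty}=|y_0(t)|^{n/2}\|u(t)\|_{L^\infty}$, which is \eqref{2}.

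To derive the equation for $v$, I would write $u=D^{-1}M^{-1}v$ and apply $MD$ to \eqref{eqn;dNLS}. By the product rule,
\begin{align*}
i\pt_t v = (i\pt_t M)M^{-1}v + M(i\pt_t D)D^{-1}M^{-1}v + MDH_0(t)D^{-1}M^{-1}v + \lam MD(|u|^{p-1}u).
\end{align*}
Each piece is computed using the commutation rules $D\Del D^{-1}=|y_0|^{-2}\Del$, $D|x|^2 D^{-1}=|y_0|^2|x|^2$, and $M\Del M^{-1}=\Del+2iy_0y_0'x\cdot\N-(y_0y_0')^2|x|^2+iny_0y_0'$. The nonlinearity is handled directly: since $|M^{-1}v|=|v|$ and the rescaling $D^{-1}$ produces a Jacobian factor, one obtains $MD(|u|^{p-1}u)=|y_0|^{-n(p-1)/2}|v|^{p-1}v$, matching the right-hand side of \eqref{eqn;NLSv}.

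The critical verification is the cancellation of the spurious quadratic-in-$x$, drift, and scalar terms. Here the ODE $y_0''+\sg y_0=0$ enters through $\tfrac{d}{dt}(y_0y_0')=(y_0')^2-\sg y_0^2$, so $(i\pt_t M)M^{-1}$ contributes $\tfrac{|x|^2}{2}((y_0')^2-\sg y_0^2)v$. Combining this with the $|x|^2 v$ contributions from the harmonic potential ($\tfrac{\sg y_0^2}{2}$), from $MD(-\tfrac12\Del)D^{-1}M^{-1}$ ($\tfrac{(y_0')^2}{2}$), and from $M(i\pt_tD)D^{-1}M^{-1}=-\tfrac{y_0'}{y_0}MAM^{-1}$ (which produces $-(y_0')^2$, using $MAM^{-1}=-ix\cdot\N+y_0y_0'|x|^2-\tfrac{in}{2}$), one finds that all four contributions sum to zero. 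Likewise the $\pm iy_0'/y_0$ contributions to $x\cdot\N v$ and the $\mp iny_0'/(2y_0)$ contributions to $v$ cancel in pairs. What survives is $i\pt_t v+\tfrac{1}{2y_0^2}\Del v=\lam|y_0|^{-n(p-1)/2}|v|^{p-1}v$, i.e.\ \eqref{eqn;NLSv}. The initial condition follows by evaluating at $t=T_0$.

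The main obstacle is the rather intricate bookkeeping of the cross-terms generated by commuting both the Laplacian and the harmonic potential across $M$ and $D$ simultaneously; what makes the computation close is the observation that the oscillator ODE for $y_0$ is exactly what compensates the harmonic potential, which is the mechanism at the heart of Korotyaev's decomposition.
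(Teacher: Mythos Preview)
Your proof is correct and follows essentially the same route as the paper's: both write $v=J_0 u$ with $J_0=MD$, compute $i\partial_t J_0$ and $J_0 H_0 J_0^{-1}$ via the same dilation/multiplication commutation rules, and observe that the ODE $y_0''+\sigma y_0=0$ kills the residual $|x|^2$ term; the nonlinear term is treated identically through the explicit dilation formula. Your write-up is slightly more explicit in listing the four $|x|^2$ contributions and the pairwise cancellations of the drift and scalar terms, but the computation and its mechanism are the same.
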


\begin{proof}
Let $y_0 (t)$ be a solution to $y''(t) + \sigma (t) y(t) =0 $ and $D_x = -i \nabla $. Straightforward calculation shows 
\begin{align*}
i \frac{\pt}{\pt t} v(t,x) = A_1(t) + A_2(t)
\end{align*}
with 
\begin{align*}
A_1 (t) \equiv i \frac{\partial }{\partial t} 
                 \left( e^{-i y_0(t) y_0'(t) |x|^2/2} e^{i \log |y_0 (t)| {A}} \right) u(t,x) 
             + e^{-i y_0(t) y_0'(t) |x|^2/2} e^{i \log |y_0 (t)| {A}} H_0(t) u(t,x)
\end{align*}
and 
\begin{align*}
A_2 (t) = \mu e^{-i y_0(t) y_0'(t) |x|^2/2} e^{i \log |y_0 (t)| {A}}  
               | u(t,x) |^{\theta} u(t,x).
\end{align*}
At first, we calculate $A_1 (t)$. Let ${J}_0 (t) := \left( e^{-i y_0(t) y_0'(t) |x|^2/2} e^{i \log |y_0 (t)| {A}} \right)$. On ${S}(\re^n)$, we have
\begin{align*}
 i \frac{\partial }{\partial t} {J}_0 (t) &= \frac{( (y_0')^2 + y _0y _0 '') |x|^2 }{2} {J}(t) -  \frac{y_0 '}{y_0} \left( e^{-i y_0(t) y_0'(t) |x|^2/2} {A} e^{i \log |y_0 (t)| {A}} \right)
 \\ &= \left( \frac{( (y_0')^2 + y _0y _0 '') |x|^2 }{2}
             -\frac{y_0' (x \cdot (D_x + y_0y_0'x) +  (D_x + y_0y_0'x) \cdot x )}
               {2y_0} \right) {J}_0 (t)
 \\ &= 
  \left( \frac{(  y _0y _0 ''- (y_0')^2 )  }{2} |x|^2 - \frac{y_0'}{y_0}  {A}  \right) {J}_0 (t)
\end{align*} 
On the other hand, commutator calculations on ${S}(\re^n)$ yields (e.g., see, Ishida-Kawamoto \cite{IK-RMP-2020} {4}) yields
\begin{align*}
e^{ i \beta {A}} |x|^2 e^{ -i \beta {A}} =  e^{2 \beta} |x| ^2 , \quad  e^{ i \beta {A}} |D_x|^2 e^{ - i \beta {A}}  = e^{-2 \beta} |D_x|^2 
\end{align*}
and by using this equation, we have
\begin{align*}
{J}_0(t) |D_x|^2 &= \frac{1}{y_0^2} e^{-i y_0(t) y_0'(t) |x|^2/2} |D_x|^2 e^{i \log |y_0 (t)| {A}} 
\\ &= \frac{1}{y_0^2} \left( D_x + y_0 y_0' x \right)^2 {J}_0(t)
\end{align*}
and 
\begin{align*}
{J}_0(t) |x|^2 = y_0 ^2 |x|^2 {J}(t).
\end{align*}
Therefore we have 
\begin{align*}
{J}_0(t) H_0(t) = \left( \frac{|D_x|^2}{2 y_0 ^2} + \frac{y_0 '}{y_0} {A} + \frac{(y_0')^2}{2} |x|^2 + \frac{y_0 ^2 \sigma }{2} |x|^2  \right) {J}_0(t)
\end{align*} 
and 
\begin{align*}
A_1(t) &= \left( \frac{|D_x|^2 }{2 y_0 (t)^2 } + \frac{y_0 (t)}{2}\left( y_0 ''(t) + \sigma (t) y_0(t) \right) |x|^2 \right) {J}_0(t) u(t,x) \\ 
&= -\frac{\Delta }{2 y_0 (t)^2 } v(t,x), 
\end{align*}
where we use the fact that $y_0$ solves \eqref{eqn;sg-equation}. As for $A_2(t)$, we employ the equation on $\phi \in {S}(\re^n) $ that
\begin{align*}
\left(e^{i \log |y_0 (t)| {A}} \phi \right) (x) 
    = |y_0(t)|^{n/2} \phi \left({y_0(t) x} \right),  
\end{align*} 
see e.g., Kawamoto-Yoneyama \cite{KY-JEE-2018} {2}. 
By this equation, it holds that
\begin{align*}
  {J}_0(t) | u(t,x) |^{p-1} u(t,x) 
      &= {|y_0 (t)|^{n/2} }e^{-i y_0(t) y_0'(t) |x|^2/2} 
             |u(t,y_0(t) x)| ^{p-1} u(t,y_0(t) x)
\\ &= e^{-i y_0(t) y_0'(t) |x|^2/2} \left| |y_0(t)|^{-n/2} 
              e^{i \log |y_0 (t)| {A}} u(t,x) \right| ^{p-1}  
           e^{i \log |y_0 (t)| {A}} u(t,x) 
\\ &= 
            |y_0 (t)|^{-n(p-1)/ 2} |v(t,x)|^{p-1} v(t,x).
\end{align*}
for all $t \ge T_0$ and $x \in \re^n$.
\end{proof}
\begin{rem}
The Lemma \ref{lem;u-v} is true for any $y_0$ satisfying the conditions in Lemma \ref{lem;u-v}. On the other hand, the $w$ solution to the linear part of reduced equation 
\begin{align*}
i \partial_t w + \frac{1}{2y_0 (t)^2}\Del w = 0
\end{align*}
can be written as 
\begin{align*}
w(t, \cdot) = e^{ i \frac{1}{2}\Delta \int _{T_0}^t \frac{1}{y_0 (s)^{2}} ds   } w(T_0, \cdot)
\end{align*}
and to characterize the mass decay/conserve properties with using $\int _{T_0}^t y_0 (s)^{-2} ds $ is complicated. Since the arbitrariness of the choice of $y_0(t)$, we replace $y_0$ as $y_1$. Then one finds 
\begin{align*}
\frac{d}{dt} \frac{y_2(t)}{y_1(t)} = \frac{1}{y_1(t)^2}
\end{align*}
and hence $w(t)$ with replacement $y_0$ to $y_1$ can be written as $w((t, \cdot) = e^{i \Delta ( Y(t) - Y(T_0) )} w(T_0, \cdot)$, where $Y(t) = (y_2(t)/ (2y_1(t)) )$. This expression of $w$ enable us to characterize the mass decay/conserve properties with only using the asymptotic behavior of $y_1(t)$ and $y_2(t)$ in $t$. 
\end{rem}

In the following, we set 
\begin{align*}
U(t,s)  = e^{i (Y(t) -Y(s) ) \Delta } =: U_Y(t) U_Y(s)^{-1}, \quad Y(t) \equiv \frac{y_2(t)}{2 y_1(t)}, \quad 
\CAL{J}(t) =  e^{-i y_1(t) y_1'(t) |x|^2/2} e^{i \log |y_1 (t)| {A}}.
\end{align*}
Then, we have the following lemma;
\begin{lem}
Let $T_0>0$ and 
$\tilde{v}(t,x) \equiv U(t,T_0) \tilde{v}_0$.
 Then $\tilde{v}=\tilde{v}(t,x)$ solves 
\begin{align}
\label{eqn;LSv}
\begin{cases}
   &i \partial _t \tilde{ v} +  \frac{1}{2y_1 (t)^2 } \Delta \tilde{ v}=0, 
        \quad t > T_0,\, x \in \re^n, \\
   &\tilde{v}(T_0,x) = \tilde{v}_0(x),
        \quad  x \in \re^n
\end{cases}
\end{align}
and an operator $U_Y(t)$ is decomposed as 
\begin{align}\label{eqn;MDFM}
U_Y(t) \tilde{v}_0=e^{ i \frac{y_2(t)}{2 y_1 (t)}\Del} \tilde{v}_0  
    = {\mathcal M}\left( \frac{y_2(t)}{2y_1(t)} \right) 
           \mathcal{D} \left(  \frac{y_2(t)}{2y_1(t)} \right) 
           \mathcal{F} 
     \mathcal{M} \left(  \frac{y_2(t)}{2y_1(t)} \right) \tilde{v}_0
\end{align}
 for all $t \geq T_0$.
\end{lem}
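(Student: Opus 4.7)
The plan is to verify both assertions by direct computation, with the only nontrivial input being the Wronskian identity derived from \eqref{eqn;sg-equation}.

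For the first assertion, I would differentiate $\tilde v(t,x)=e^{i(Y(t)-Y(T_0))\Delta}\tilde v_0$ in $t$. Since $Y(t)$ is a scalar and the multiplier $e^{i(Y(t)-Y(T_0))\Delta}$ acts diagonally in Fourier variables, one immediately gets $i\pt_t\tilde v=-Y'(t)\Del\tilde v$, so the equation \eqref{eqn;LSv} reduces to the identity
\begin{align*}
Y'(t)=\frac{1}{2y_1(t)^2}.
\end{align*}
This is where the ODE \eqref{eqn;sg-equation} enters: a direct computation gives $Y'(t)=(y_2'y_1-y_2y_1')/(2y_1^2)$, and the Wronskian $W(t)=y_1y_2'-y_1'y_2$ satisfies $W'(t)=y_1y_2''-y_1''y_2=-\sigma y_1y_2+\sigma y_2y_1=0$, so $W$ is constant and equals $W(0)=y_1(0)y_2'(0)-y_1'(0)y_2(0)=1$ by the initial data in \eqref{eqn;sg-equation}. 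This gives $Y'(t)=1/(2y_1(t)^2)$ as required, and $\tilde v(T_0)=\tilde v_0$ is immediate.

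For the factorization \eqref{eqn;MDFM}, I would proceed by writing $U_Y(t)$ as the free-Schr\"odinger-type kernel with time $2Y(t)$:
\begin{align*}
U_Y(t)\tilde v_0(x)=\bigl(4\pi iY(t)\bigr)^{-n/2}\int_{\re^n}e^{i|x-y|^2/(4Y(t))}\tilde v_0(y)\,dy,
\end{align*}
which is just the integral representation of $e^{iY(t)\Del}$. Expanding $|x-y|^2=|x|^2-2x\cdot y+|y|^2$ factors the kernel as $e^{i|x|^2/(4Y)}\cdot e^{-ix\cdot y/(2Y)}\cdot e^{i|y|^2/(4Y)}$. Reading the three resulting operations from right to left reproduces exactly the composition $\mathcal M(Y)\mathcal D(Y)\mathcal F\mathcal M(Y)$ with the standard multiplier $\mathcal M(\tau)f(x)=e^{i|x|^2/(4\tau)}f(x)$ and dilation $\mathcal D(\tau)f(x)=(2i\tau)^{-n/2}f(x/(2\tau))$, matching the notation $\tau=y_2(t)/(2y_1(t))$ used in the statement.

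The only potential pitfall is bookkeeping: one must be careful that the parameter of $\mathcal M$ and $\mathcal D$ is $Y(t)=y_2/(2y_1)$ rather than the "physical time'' $2Y(t)$ that appears in the standard factorization of $e^{it\Del/2}$. Once the conventions are fixed so that $\mathcal M(\tau)$ and $\mathcal D(\tau)$ are adapted to the operator $e^{i\tau\Delta}$ (not $e^{i\tau\Delta/2}$), the completing-the-square computation is automatic and the identity extends from $\mathcal S(\re^n)$ to $L^2$ by density, since every factor is either a bounded multiplier, an isometry, or a unitary dilation up to a phase. I anticipate no substantive obstacle beyond ensuring that the Wronskian normalization and the convention for $\mathcal M,\mathcal D$ are consistent with those used throughout the paper.
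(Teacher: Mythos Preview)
Your proposal is correct and follows essentially the same route as the paper. The paper's own proof is a single sentence invoking the identity $U(T_0,T_0)=\mathrm{Id}_{L^2}$, uniqueness of the propagator, and the well-known formula $e^{is\Delta/2}=\mathcal{M}(s)\mathcal{D}(s)\mathcal{F}\mathcal{M}(s)$; the Wronskian identity $\frac{d}{dt}\frac{y_2}{y_1}=\frac{1}{y_1^2}$ you compute is exactly what the paper records in the Remark immediately preceding this lemma, and your kernel computation simply unpacks the ``well-known'' factorization the paper cites. Your caution about the convention for $\mathcal{M}$ and $\mathcal{D}$ is well placed, since the paper does not spell these out explicitly, but this is a matter of bookkeeping rather than substance.
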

This lemma can be shown by combining the identity $ U(T_0,T_0) = \mathrm{Id}_{L^2}$, the uniqueness of the propagator $U(t, T_0)$ and the well-known decomposition formula that for any $s \in \R $, $e^{is \Delta /2} = \mathcal{M}(s) \mathcal{D}(s) \mathcal{F} \mathcal{M} (s)$.

Noting these, we consider the nonlinear equation associated the linear problem \eqref{eqn;LSv}.
\begin{equation}\label{eqn;NLS-y_1}
\left\{
\begin{split}
& i \pt_t v + \frac{1}{2y_1(t)^2}\Del v 
          = \lam |y_1(t)|^{-\frac{n(p-1)}{2}}|v|^{p-1}v ,   
     & t >T_0,\, x \in \re^n,\\
&  v(T_0,x)  =  v_0, 
     & x \in \re^n,
\end{split}
\right.
\end{equation}
where $y_1$ and $y_2$ are fundamental solutions to \eqref{eqn;sg-equation}.
Here we remark that $2 y_1(t) Y(t) = y_2(t) $.

Let $u$ be the solution to 
the problem \eqref{eqn;dNLS} on $[0, T_1] $, $T_1 > 2T_0$.
Take $T_0$ so that Assumption (A)--(C) hold and fix it. Then thanks to Proposition \ref{p1}, there exist a solution to \eqref{eqn;dNLS} satisfying $u (t, \cdot) \in C([0,T_1] ; H^{s,s} (\R ^n))$ and a sufficient small constant $\ep _0 > 0$ so that 
\begin{align*}
\sup_{t \in [T_0,T_1]}\left\|
u(t, \cdot)
\right\|_{H^{s,s}} \leq \ep _0.
\end{align*}
Here for all $\phi \in H^{s,s}$ and $t \in [T_0,T_1]$, 
\begin{align}
\nn 
\left\| 
\CAL{J}(t) \phi 
\right\|_{H^{s,s}} &\leq C \left\| (|-\Delta |^{s/2} + |x|^s +1 ) \CAL{J}(t) \phi \right\|_{L^2}\\ 
\nn &= 
C \left\| \left( |-\Delta |^{s/2} + |x|^s +1  \right) e^{-iy_1(t)y_1 '(t) |x|^2/2}e^{i \log |y_1(t)| A}  \phi \right\|_{L^2} 
\\ 
\nn &= 
C \left\| 
\left(  \left| -i \nabla y_1(t)- y_1'(t) x \right|^s + |y_1(t)|^{-s} |x|^s + 1 \right) \phi
\right\|_{L^2}
\\ 
\label{5} & \leq \hat{C}_0 ({T_1}) \left\| \phi \right\|_{H^{s,s}}
\end{align} 
holds, where we use $| y_1 (t) | \geq c_0$ for $t \geq T_0$ and interpolation between 
\begin{align*}
\left\| | -i a \nabla + b x |^0 \cdot {\bf 1} \cdot \J{-\Delta + |x|^2 +1}^{0} \right\| \leq \tilde{C}_0(T_1)
\end{align*}
and 
\begin{align*}
\left\| | -i a\nabla +  b x |^2 \cdot {\bf 1} \cdot \J{-\Delta + |x|^2 +1}^{-1} \right\| \leq \tilde{C}_1(T_1), 
\end{align*}
see, e.g., Kato \cite{Kato}, where $a,b \in \R$. With regard to the above inequality, \eqref{1} and \eqref{2}, we also have the local-in-time solution $v(t)$ on $t \in [T_0,T_1]$ to \eqref{eqn;NLSv} and that has the properties for $t\in [T_0,T_1]$;  
\begin{align}
\nn
\J{Y(t)}^{n/2}
\left\| 
v(t)
\right\|_{L^\infty} &= \J{Y(t)}^{n/2} \left\| \CAL{J}(t)^{} u(t) \right\|_{L^\infty} \\ 
\nn & = \J{Y(t)}^{n/2} |y_1(t)|^{n/2} \left\| u(t)\right\|_{L^\infty} \\ 
\label{6} & \leq \hat{C}_1(T_1) \left\| u(t)\right\|_{L^\infty},   
\end{align}
\begin{align}
\left\| v(t) \right\|_{H^{s,s}}  \leq \left\| \CAL{J}(t)^{} u(t) \right\|_{H^{s,s}} 
 \leq \hat{C}_0 (T_1) \left\| u(t) \right\|_{H^{s,s}} \label{7}
\end{align}
and 
\begin{align}
\label{8}
\left\|
|J|^s(t) v(t)
\right\|_{L^2} \leq C(T_1) \left\| \CAL{J}(t)^{} u(t) \right\|_{H^{s,s}} \leq \hat{C}_{2}(T_1) \left\| u(t) \right\|_{H^{s,s}}.
\end{align}
Rough calculation demands the $T_1 $ dependence for constants $\hat{C}_j (T_1)$, $j = 0,1,2$ and hence in the followings we remove the $T_1$ dependence of which by using the energy estimate for the associated problem \eqref{eqn;dNLS}.

We define an operator which provide a dispersive time decay of solutions to \eqref{eqn;dNLS} by 
\begin{equation}\label{eqn;J}
J(t)f =U_Y(t) x U_Y\left( t\right)^{-1} f
= \CAL{M}(Y(t)) iY(t) \nabla \CAL{M}^{-1} (Y(t))f = (x + iY(t)\nabla)f, 
\end{equation}
and the fractional power of $J(t)$  is defined that for $0<\gm<1$,
\begin{equation}\label{eqn;J-frac}
   |J|^\gm(t)f = U_Y\left( t\right) |x|^\gm U_Y\left( t\right)^{-1}f 
      = \CAL{M}(Y(t)) |Y(t)|^\gm |D_x|^\gm M^{-1}f,  
\end{equation}
where $|D_x|^\gm f= \mathcal{F}^{-1}[ |\xi|^\gm \widehat{f}]$. 
Note that the operators $J(t)$ and $|J|^\gm(t)$ commute with 
$i\partial_t + \frac{1}{2 y_1(t)^2} \pt^2_x$. 
Then, the unitary operator $U_Y(t)$ has the following decay property
(cf. Ozawa \cite{O-CMP-1991}, Hayashi-Naumkin \cite{HN-AJM-1998}, 
Kita-Shimomura \cite{K-S-JDE-2007}, 
\cite{K-S-JMSJ-2009}, see also \cite{H-JMP-2019},  \cite{K-AA-2016}, \cite{K-L-S-DIE-2014}, \cite{S-CPDE-2006}).

\begin{lem}
\label{lem;R-decay}
Let $n \geq 1$, $s>n/2$, $s_1 = \min \{ s-n/2, 1 \} $ and $p$ be the super-critical given by \eqref{eqn;super-critical}. 
Then, there exists $C>0$ such that for any  
$f \in \S(\re)$ and $t \geq T_0$,
the following pointwise estimate holds:
\begin{align}
\begin{aligned}
  \Big\|
    \F [U_Y(t)^{-1}&(|f|^{p-1}f)]
      - |y_2(t)| ^{-\frac{n}{2}(p-1)}|\F [U_Y(t)^{-1})f]|^{p-1}(\F [U_Y(t)^{-1}f])
  \Big\|_{L^\infty}\\
 \le& \,C|y_2(t)|^{-\frac{n}{2}(p-1)} |Y(t) |^{-s_1}
     (\|f\|_{L^2}+\||J|^s(t)f\|_{L^2})^p.  \label{eqn;R-decay1}
\end{aligned}
\end{align}
Moreover, the following $L^2$-decay property  holds: 
Suppose $1<p < \infty$, if $n=1,2,$ and 
$ 1< p< \frac{n}{n-2}$ if $n \geq 3$. Then we have
\begin{align}
\begin{aligned}
  \Big\|
    \F [U_Y(t)^{-1}&(|f|^{p-1}f)]
      - |y_2(t) |^{-\frac{n}{2}(p-1)}|\F [U_Y(t)^{-1}f]|^{p-1}
          (\F [U_Y(t)^{-1} f])
  \Big\|_{L^2}\\
       \le& \,C |y_2(t)| ^{-\frac{n}{2}(p-1)}|Y(t)|^{-\frac{\th}{2}}
            (\|f\|_{L^2}+\|J(t)f\|_{L^2})^p,  
     \label{eqn;R-decay2}
\end{aligned}
\end{align}
where 
%let $\gamma = \gamma (n) $ with $\gamma (1) = \frac12$ and $\gamma (n) = 1$ for $n \geq 2$. 
$\th =1$ if $n=1$, $0<\th <1$ if $n=2$, 
and $\th=\frac{n}{2}-\frac{n-2}{2}p$ if $n \ge 3$.
Here $J(t)=x+iY(t)\N$ and $Y(t)=y_2(t)/2y_1(t)$.
\end{lem}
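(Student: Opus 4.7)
My plan is to use the MDFM factorization \eqref{eqn;MDFM} to reduce the task to controlling how far the unitary operator $T_Y := \F\,\CAL{M}(Y)^{-1}\F^{-1}$ is from the identity. Setting $G := \CAL{D}(Y)^{-1}\CAL{M}(Y)^{-1}f$, the factorization gives $\F U_Y(t)^{-1}f = T_Y G$. Since $\CAL{M}(Y)^{-1}$ is a pure phase and $\CAL{D}(Y)^{-1}$ is a rescaling, a direct computation yields
$$
\CAL{D}(Y)^{-1}\CAL{M}(Y)^{-1}(|f|^{p-1}f) = |y_2(t)|^{-\frac{n(p-1)}{2}}|G|^{p-1}G,
$$
the exponent of $|y_2|$ coming from the Jacobian of the dilation together with $y_2 = 2y_1 Y$. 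Consequently the remainder under consideration takes the compact form
$$
|y_2(t)|^{-\frac{n(p-1)}{2}}\Bigl[T_Y(|G|^{p-1}G) - |T_Y G|^{p-1}(T_Y G)\Bigr],
$$
so all that remains is to extract a decay rate $|Y(t)|^{-s_1}$ (resp.\ $|Y(t)|^{-\t/2}$) from the bracket.

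Next I would split the bracket as $(T_Y - I)(|G|^{p-1}G) + \bigl[|G|^{p-1}G - |T_Y G|^{p-1}(T_YG)\bigr]$ and control the second piece by the Lipschitz inequality $\bigl||a|^{p-1}a - |b|^{p-1}b\bigr| \le C|a-b|(|a|^{p-1}+|b|^{p-1})$ with $a=G$, $b=T_YG$. Both resulting terms are thereby reduced to applications of $T_Y - I$ to either $G$ or $|G|^{p-1}G$, multiplied by Hölder factors in $L^\infty$ or $L^{p+1}$. On the operator side, the conjugation identity
$(T_Y - I)h = \F\bigl[(e^{-i|x|^2/(4Y)} - 1)\F^{-1}h\bigr]$
together with $|e^{i\t}-1| \le C|\t|^\alpha$ for every $\alpha \in [0,1]$ yields the key pointwise bound $\bigl|e^{-i|x|^2/(4Y)} - 1\bigr| \le C|Y|^{-\alpha}|x|^{2\alpha}$. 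For \eqref{eqn;R-decay1} I take $\alpha = s_1/2$ and combine $\|\F(\cdot)\|_{L^\infty}\le C\|\cdot\|_{L^1}$ with a weighted Cauchy--Schwarz in $\J{x}^{n/2+\varepsilon}$; for \eqref{eqn;R-decay2} I take $\alpha = \t/2$ and use Plancherel in place of the Hausdorff--Young step.

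The last step is to translate the $G$-norms back to $f$. Unitarity of $\CAL{M}(Y)$ and $\CAL{D}(Y)$ gives $\|G\|_{L^2} = \|f\|_{L^2}$, and the definitions \eqref{eqn;J}, \eqref{eqn;J-frac} give $\||x|^{\gm} G\|_{L^2} = \||J|^{\gm}(t)f\|_{L^2}$ for $\gm \in \{1,s\}$. For the nonlinearity in \eqref{eqn;R-decay1} I invoke $H^s \hookrightarrow L^\infty$ (which needs $s > n/2$) to bound $\|G\|_{L^\infty}$, while \eqref{eqn;R-decay2} is closed by $H^1 \hookrightarrow L^{p+1}$, which is exactly the range $1 < p < n/(n-2)$ in $n \ge 3$ (and unrestricted $p$ for $n = 1,2$) appearing in the statement. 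The main obstacle I anticipate is handling the fractional weight $|x|^{s_1}$ in the $L^\infty$ estimate when $s_1 \in (0,1)$: closing the bound requires a fractional weighted Sobolev inequality together with a commutator-type estimate between $|x|^{s_1}$ and $\F^{-1}$. The endpoint value $\t = n/2 - (n-2)p/2$ for $n \ge 3$ in \eqref{eqn;R-decay2} is precisely the largest admissible exponent in the corresponding Sobolev embedding, which explains the dimension-dependent restrictions on $\t$.
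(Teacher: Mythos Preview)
Your overall architecture matches the paper's: both use the MDFM factorization to extract the $|y_2|^{-n(p-1)/2}$ prefactor, split the bracket into the two pieces $R_1=(T_Y-I)(|G|^{p-1}G)$ and $R_2=|G|^{p-1}G-|T_YG|^{p-1}T_YG$, and exploit the oscillation bound $|e^{-i|x|^2/(cY)}-1|\le C(|x|^2/|Y|)^{\alpha}$ to produce the $|Y|^{-\alpha}$ decay. The paper likewise defers the $L^\infty$ estimate \eqref{eqn;R-decay1} to Hoshino and only writes out \eqref{eqn;R-decay2}.

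There is, however, a genuine gap in your treatment of $R_1$ for the $L^2$ estimate. After Plancherel and the oscillation bound you are left with
\[
\|(T_Y-I)(|G|^{p-1}G)\|_{L^2}\le C|Y|^{-\theta/2}\,\||\nabla|^{\theta}(|G|^{p-1}G)\|_{L^2},
\]
and the Sobolev embedding $H^1\hookrightarrow L^{p+1}$ does \emph{not} control a fractional derivative of the nonlinearity. The paper closes this term by invoking the fractional chain rule of Christ--Weinstein (and the Kato--Ponce/Kenig--Ponce--Vega circle of estimates) followed by Gagliardo--Nirenberg, which is exactly what produces the specific exponent $\theta=\frac{n}{2}-\frac{n-2}{2}p$ for $n\ge3$. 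Your plan omits this step entirely. A related issue appears in $R_2$ for $n\ge2$: to place $(T_Y-I)G$ in the correct $L^q$ (with $q>2$) you need the Sobolev embedding $\dot H^{1-\theta}\hookrightarrow L^q$ before passing back to $L^2$ via Plancherel, which is how the paper arranges for the weights $|x|^{1-\theta}\cdot|x|^{\theta}=|x|$ to recombine into $\|J(t)f\|_{L^2}$; pure Plancherel will not reach $L^q$.

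A secondary point: your choice $\alpha=s_1/2$ in \eqref{eqn;R-decay1} yields only $|Y|^{-s_1/2}$, not the stated $|Y|^{-s_1}$. Taking $\alpha=s_1$ fixes the decay rate but forces the weight $|x|^{2s_1}$, and then your weighted Cauchy--Schwarz would require $2s_1+n/2<s$, which fails when $s_1=s-n/2$. The $L^\infty$ bound therefore needs a sharper argument than Hausdorff--Young plus Cauchy--Schwarz; this is precisely the ``fractional weight/commutator'' obstacle you flagged, and it is not minor.
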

\begin{proof}[Proof of Lemma \ref{lem;R-decay}]
The proof of \eqref{eqn;R-decay1} and \eqref{eqn;R-decay2} relies on 
the previous works \cite{HN-AJM-1998},  \cite{K-L-S-DIE-2014},  \cite{K-S-JDE-2007}, \cite{K-S-JMSJ-2009}, \cite{H-JMP-2019}. We only prove \eqref{eqn;R-decay2} briefly
(see \cite{H-JMP-2019} for details).
Let the left side in \eqref{eqn;R-decay2} be $R(t,\xi)$.
Then, $R$ is decomposed by  $R = R_1+R_2$, where
\begin{align*}
  & R_1(t,\xi)=\,\frac{\lam}{|y_2(t)|^{\frac{n}{2}(p-1)}}
                     \F (e^{-i\frac{|x|^2}{2Y(t)}}-1)\F^{-1}
                       \Big(|\F [e^{-i\frac{|x|^2}{2Y(t)}}U_Y^{-1}f]|^{p-1}
                            (\F [e^{-i\frac{|x|^2}{2Y(t)}}U_Y^{-1}]f)\Big),\\
  & R_2(t,\xi)=\,\frac{\lam}{|y_2(t)|^{\frac{n}{2}(p-1)}}
              \Big(
                    |\F e^{-i\frac{|x|^2}{2Y(t)}}U_Y^{-1}f|^{p-1}
                    (\F e^{-i\frac{|x|^2}{2Y(t)}}U_Y^{-1}f)
                   - |\F U_Y^{-1}f|^{p-1}(\F U_Y^{-1}f)
             \Big).
\end{align*}

Let $n \ge 3$. By the inequality;
$\displaystyle
    |e^{-i|x|^2/2Y(t)}-1| \le C(|x|^2/|Y(t)|)^{\theta/2}
$ 
for $\theta =\frac{n}{2}-\frac{n-2}{2}p \in (0,1)$, we have
\begin{align*}
\|R_1(t)\|_{L^2} 
  \le&\,C|Y(t) |^{-\frac{\th}{2}}
        \Big\||\N|^\th
                \Big(|\F [e^{-i\frac{|x|^2}{2Y(t)}}U_Y^{-1}f]|^{p-1}
                   (\F [e^{-i\frac{|x|^2}{2Y(t)}}U_Y^{-1}]f)\Big)
         \Big\|_{L^2}\\
    \le&\, C |Y(t) |^{-\frac{\th}{2}}
        \Big\||\F [e^{-i\frac{|x|^2}{2Y(t)}}U_Y^{-1}f]|^{p-1}
         \Big\|_{L^{\frac{n}{1-\th}}}  
         \Big\||\N|^\th 
                   (\F [e^{-i\frac{|x|^2}{2Y(t)}}U_Y^{-1}]f))
         \Big\|_{L^q} \\
    \le&\,
               C |Y(t) |^{-\frac{\th}{2}}
        \Big\|\F [e^{-i\frac{|x|^2}{2Y(t)}}U_Y^{-1}f]
         \Big\|^{p-1}_{L^{\frac{n(p-1)}{1-\th}}}  
         \Big\||\N|
                   (\F [e^{-i\frac{|x|^2}{2Y(t)}}U_Y^{-1}]f))
         \Big\|_{L^2},
\end{align*}
where
we apply the fractional chain rule (see Christ-Weinstein  \cite{CW-JFA-1991} for details 
and Kenig-Ponce-Vega \cite{KPV-CPAM-1993}) to the above second estimate
and  the Gagliardo-Nirenberg inequality and the Sobolev embedding with
\begin{align*}
 % \frac{1}{2}=\frac{1-\th}{n} + \frac{1}{q}, \quad
 \frac{1}{q}-\frac{\th}{n} =\frac{1}{2}-\frac{1}{n}, \quad 
  \frac{n(p-1)}{1-\th} = \frac{2n}{n-2},
\end{align*}
respectively.
Noting that $J(t) f=U_Y |x| U_Y^{-1}f$,
and using interpolation, we have \eqref{eqn;R-decay2}.

We next estimate $R_2(t)$ in the similar argument for $R_1$ that 
\begin{align}
  \|R_2(t)\|_{L^2} 
  \le&\,
        C \Big(
               \|\F e^{-i\frac{|x|^2}{2Y(t)}}U^{-1}_Yf
               \|^{p-1}_{L^{\frac{n(p-1)}{1-\th}}}
               +\|\F U^{-1}_Yf\|^{p-1}_{L^{\frac{n(p-1)}{1-\th}}}
            \Big) \|\F(e^{-i\frac{|x|^2}{2Y(t)}}-1)U_Y^{-1}f\|_{L^q}
        \label{eqn;R_2-est}\\
  \le&\,C\|J(t)f\|^{p-1}_{L^2}
                \| |\N|^{1-\th}
                   \F(e^{-i\frac{|x|^2}{2Y(t)}}-1)U^{-1}_Yf\|_{L^2} \nn\\
  \le&\,C|Y(t)|^{-\frac{\th}{2}}
                 \|J(t)f\|^{p-1}_{L^2}
                \||x| U^{-1}_Yf\|_{L^2},\nn
\end{align}
where $\th=\frac{n}{2}-\frac{n-2}{2}p$.
By the interpolation, we obtain \eqref{eqn;R-decay2}.
Thus, the estiamte \eqref{eqn;R-decay2}  follows for any $n \ge 3$.

Let $n=2$.
By the fractional chain rule, we have that
\begin{align*}
\|R_1(t)\|_{L^2} 
  \le&\,C|Y(t) |^{-\frac{\th}{2}}
        \Big\||\N|^\th
                \Big(|\F [e^{-i\frac{|x|^2}{2Y(t)}}U_Y^{-1}f]|^{p-1}
                   (\F [e^{-i\frac{|x|^2}{2Y(t)}}U_Y^{-1}]f)\Big)
         \Big\|_{L^2}\\
    \le&\, C|Y(t) |^{-\frac{\th}{2}}
        \Big\||\F [e^{-i\frac{|x|^2}{2Y(t)}}U_Y^{-1}f]|^{p-1}
         \Big\|_{L^{\frac{2}{1-\th}}}  
         \Big\||\N|^\th 
                   (\F [e^{-i\frac{|x|^2}{2Y(t)}}U_Y^{-1}]f))
         \Big\|_{L^q} \\
    \le&\,
               C|Y(t) |^{-\frac{\th}{2}}
        \Big\|\F [e^{-i\frac{|x|^2}{2Y(t)}}U_Y^{-1}f]
         \Big\|^{p-1}_{L^{\frac{2(p-1)}{1-\th}}}  
         \Big\||\N|
                   (\F [e^{-i\frac{|x|^2}{2Y(t)}}U_Y^{-1}]f))
         \Big\|_{L^2},%\\
  %  \le&\,C |Y(t) |^{-\frac{\th}{2}}\|v_0\|^p_{H^{1,1}},
\end{align*}
where we use the H\"older inequality, the Gagliardo-Nirenberg inequality with
\begin{align*}
  \frac12=\frac{1-\th}{2}+\frac1q, 
     \quad
  \frac{1-\th}{2(p-1)}=\frac{1-\th_1}{2} + \th_1\Big(\frac{1}{2}-\frac{1}{2}\Big), 
     \quad
 \frac{1}{q}-\frac{\th}{2} 
    =\frac{1-\th_2}{2}+\th_2 \Big(\frac12-\frac{1}{2}\Big)
\end{align*}
for sufficiently small $\ep>0$.
We remark that $\th_2=1$ and $\th_1=(p+\th-2)/(p-1) < 1$ for $\th < 1$. 
 The similar argument of \eqref{eqn;R_2-est} yields 
 $\|R_2(t)\|_{L^2} \le C|Y(t)|^{-\frac{\th}{2}}\|J(t)f\|^p_{L^2}$
 for any $0<\th<1$.
 Hence we obtain \eqref{eqn;R-decay2} for $n=2$.

Let $n=1$. Then the Leibnitz rule and  the Sobolev embedding
$H^1 (\R) \subset  L^\infty(\R) $ implies 
\begin{align*}
\|R_1(t)\|_{L^2} 
  \le&\,C|Y(t) |^{-\frac{1}{2}}
        \Big\||\N|
                \Big(|\F [e^{-i\frac{|x|^2}{2Y(t)}}U_Y^{-1}f]|^{p-1}
                   (\F [e^{-i\frac{|x|^2}{2Y(t)}}U_Y^{-1}]f)\Big)
         \Big\|_{L^2}\\
    \le&\,
               C |Y(t) |^{-\frac{1}{2}}
        \Big\|\F [e^{-i\frac{|x|^2}{2Y(t)}}U_Y^{-1}f]
         \Big\|^{p-1}_{L^{\infty}}  
         \Big\||\N|
                   (\F [e^{-i\frac{|x|^2}{2Y(t)}}U_Y^{-1}]f))
         \Big\|_{L^2}\\
    \le&\,C |Y(t) |^{-\frac{1}{2}}(\|f\|_{L^2}+\|J(t)f\|_{L^2})^p,
\end{align*}
where we apply the a priori bound \eqref{eqn;bound}
to have the last estimate.
Similarly we have 
$\|R_2(t)\|_{L^2} \le C|Y(t) |^{-\frac12}(\|f\|_{L^2}+\|J(t)f\|_{L^2})^p$
and hence we obtain \eqref{eqn;R-decay2} for $n=1$.
\end{proof}

\section{Global existence of dissipative solutions}
The argument of obtaining a global solution to \eqref{eqn;NLSv} is similar to those 
of the previous works \cite{HN-AJM-1998}, \cite{K-L-S-DIE-2014}
and \cite{S-CPDE-2006}.
For arbitrary small fixed $\ep_1 > 0$, we define for $ T_0 \leq T_1 \leq 2T_0$, 
\begin{align}
\label{eqn;X_{T_1}}
\|v\|_{X_{T_1}}
  \equiv \sup_{T_0 \leq t \leq  T_1}
    \Big\{\J{Y (t)}^{\frac{n}{2}}\|v(t)\|_{L^\infty} 
      +\J{ Y(t) }^{-\ep_1}
      \Big(\|v(t)\|_{{H}^s}+\||J|^s(t)v\|_{L^2}\Big)
    \Big\},
\end{align}
where $\J{x}=(1+|x|^2)^{\frac{1}{2}}$, $Y(t)=\frac{y_2(t)}{2 y_1(t)}$ and $\ep_1 >0$ is the small constant.
We remark that $Y(t)^{-1}$ has a time decay property by the assumption (C).
%　　　　　　　　A P R I O R I - 1
\begin{lem}\label{lem;a priori}
Let $v$ be the solution to 
the problem \eqref{eqn;NLSv}. Suppose that there exists $0< \ep \ll 1 $ such that $\hat{C}_j (T_1) \ep _0 \leq \ep /3$, where $\hat{C}_j (T_1)$, $j=0,1,2$ are the equivalent to those in \eqref{5}--\eqref{8}.
Then, there exists a constant $C>0$, 
independent of $T_1$ such that
\begin{align}
\label{eqn;bound}
    \|v\|_{X_{T_1}}  \le C\|v_0\|_{H^{s,s}},
\end{align}
where 
$
     \|v_0\|_{H^{s,s}}=\|v_0\|_{H^s}+\||x|^su_0\|_{L^2} 
$
is sufficiently small and we denote $\ep_0 \equiv \|v_0\|_{H^{s,s}}$.
\end{lem}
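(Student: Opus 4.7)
The plan is to run a standard continuity/bootstrap argument in the norm $\|\cdot\|_{X_{T_1}}$. I would fix $M>0$, assume a priori $\|v\|_{X_{T_1}}\le M$ on $[T_0,T_1]$, derive an improved bound $\|v\|_{X_{T_1}}\le C_0\ep_0+C(M)\ep_0^p$ with $C(M)\ep_0^{p-1}\ll 1$, and close the bootstrap uniformly in $T_1$ via smallness of $\ep_0=\|v_0\|_{H^{s,s}}$. The three pieces of the $X_{T_1}$-norm would be treated separately. For the $L^2$ piece, multiplying \eqref{eqn;NLSv} by $\bar v$ and taking the imaginary part yields the dissipative identity
\begin{equation*}
\|v(t)\|_{L^2}^2+2|\text{Im}\,\lam|\int_{T_0}^{t}|y_1(\tau)|^{-\frac{n(p-1)}{2}}\|v(\tau)\|_{L^{p+1}}^{p+1}\,d\tau=\|v_0\|_{L^2}^2,
\end{equation*}
so $\|v(t)\|_{L^2}\le\|v_0\|_{L^2}$ for free. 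For the pointwise decay piece I would apply the factorization \eqref{eqn;MDFM} to $v=U_Y(t)[U_Y(t)^{-1}v]$, use $\|\mathcal{D}(Y(t))\phi\|_{L^\infty}=|Y(t)|^{-n/2}\|\phi\|_{L^\infty}$ together with the fact that $\mathcal{M}(Y(t))$ is a unit-modulus multiplier, and then invoke the Hausdorff--Young-type bound $\|\mathcal{F}g\|_{L^\infty}\le\|g\|_{L^1}\le C\|\J{x}^sg\|_{L^2}$ (valid for $s>n/2$ by Cauchy--Schwarz). Observing that $|x|^sU_Y(t)^{-1}=U_Y(t)^{-1}|J|^s(t)$, this gives
\begin{equation*}
\|v(t)\|_{L^\infty}\le C|Y(t)|^{-\frac{n}{2}}\bigl(\|v(t)\|_{L^2}+\||J|^s(t)v(t)\|_{L^2}\bigr).
\end{equation*}

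For the Sobolev and weighted pieces, since $\J{\nabla}^s$ and $|J|^s(t)$ both commute with $i\partial_t+(2y_1(t)^2)^{-1}\Delta$, I would apply them to \eqref{eqn;NLSv}, pair with the corresponding conjugate, and discard the nonpositive contribution produced by $\text{Im}\,\lam<0$. Bounding the remaining commutators via the Christ--Weinstein fractional chain rule \cite{CW-JFA-1991} and Kenig--Ponce--Vega \cite{KPV-CPAM-1993}---where the bound on $|J|^s(t)(|v|^{p-1}v)$ is recovered from the $|x|^s$ version after conjugation by $U_Y(t)$---yields
\begin{equation*}
\frac{d}{dt}\bigl(\|v\|_{H^s}^2+\||J|^s(t)v\|_{L^2}^2\bigr)\le C|y_1(t)|^{-\frac{n(p-1)}{2}}\|v(t)\|_{L^\infty}^{p-1}\bigl(\|v\|_{H^s}^2+\||J|^s(t)v\|_{L^2}^2\bigr).
\end{equation*}
Substituting the pointwise $L^\infty$ bound, using $|y_1(t)||Y(t)|=|y_2(t)|/2$, and invoking the bootstrap hypothesis $\|v\|_{H^s}+\||J|^s(t)v\|_{L^2}\le M\J{Y(t)}^{\ep_1}$ converts the coefficient to $CM^{p-1}|y_2(t)|^{-n(p-1)/2}\J{Y(t)}^{(p-1)\ep_1}$. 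Gronwall then closes the bootstrap provided $\ep_1>0$ is chosen small (so that $(p-1)\ep_1$ leaves headroom in the $\J{Y}^{2\ep_1}$ weight built into the norm) and $M^{p-1}$ is small (which follows from smallness of $\ep_0$).

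The main obstacle I anticipate is the fractional energy estimate for $\||J|^s(t)v\|_{L^2}$ when $s\in(n/2,p)$ is non-integer: $|J|^s(t)$ is not a differential operator, so passing it through the nonlinearity and recovering a clean bound of the form $\|v\|_{L^\infty}^{p-1}\||J|^s(t)v\|_{L^2}$ requires transporting the fractional Leibniz/chain rule carefully through the conjugation $|J|^s(t)=U_Y(t)|x|^sU_Y(t)^{-1}$, i.e.\ working on the profile side $\mathcal{F}[U_Y(t)^{-1}v]$. A secondary but essential point is the calibration of $\ep_1$ against $\delta$ from assumption (C), so that the Gronwall exponent $CM^{p-1}\int_{T_0}^{t}|y_2(\tau)|^{-n(p-1)/2}\J{Y(\tau)}^{(p-1)\ep_1}\,d\tau$ is genuinely absorbed into $\J{Y(t)}^{2\ep_1}$ uniformly in $T_1$; this is where smallness of $\ep_0$ (and hence of $M$) is used decisively, and where the $T_1$-dependent constants $\hat C_j(T_1)$ from \eqref{5}--\eqref{8} must be seen to affect only the initial data, not the propagation.
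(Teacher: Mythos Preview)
Your bootstrap strategy and your treatment of the $H^s$ and $|J|^s$ pieces via Duhamel/energy plus the fractional chain rule are essentially what the paper does. The genuine gap is in the $L^\infty$ piece. Your direct bound
\[
\|v(t)\|_{L^\infty}\le C|Y(t)|^{-n/2}\bigl(\|v(t)\|_{L^2}+\||J|^s(t)v(t)\|_{L^2}\bigr)
\]
only yields $\J{Y(t)}^{n/2}\|v(t)\|_{L^\infty}\le C\bigl(\|v(t)\|_{L^2}+\||J|^s(t)v(t)\|_{L^2}\bigr)$, and the right-hand side is, under the bootstrap hypothesis, merely $\le M\J{Y(t)}^{\ep_1}$. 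This \emph{grows} in $t$, so you never recover a uniform bound on the $L^\infty$ component of $\|v\|_{X_{T_1}}$, and the bootstrap cannot close on that component. This is not a technicality: in the critical case the weighted norm $\||J|^s(t)v\|_{L^2}$ genuinely grows like $\J{Y(t)}^{\ep_1}$, so no rearrangement of your inequality removes the loss.

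The paper fixes this by passing to the profile $\tilde v(t,\xi)=\F[U_Y(t)^{-1}v]$ and deriving the pointwise-in-$\xi$ equation
\[
i\partial_t\tilde v=\frac{\lambda}{|y_2(t)|^{\frac{n}{2}(p-1)}}|\tilde v|^{p-1}\tilde v+R(t,\xi),
\]
where $R$ carries the extra decay of Lemma~\ref{lem;R-decay}. Multiplying by $\overline{\tilde v}$ and taking imaginary parts, the condition $\text{Im}\,\lambda<0$ makes the main term nonpositive, so $\partial_t|\tilde v|\le |R|$. Since $\|R(t)\|_{L^\infty}\le C|y_2(t)|^{-\frac{n}{2}(p-1)}|Y(t)|^{-s_1}\J{Y(t)}^{p\ep_1}\|v\|_{X_{T_1}}^p$ is integrable in $t$ (the $|Y(t)|^{-s_1}$ factor beats the logarithmic divergence of the critical weight), one obtains $\|\tilde v(t)\|_{L^\infty}\le C\|v_0\|_{H^{s,s}}+C\|v\|_{X_{T_1}}^p$ \emph{uniformly} in $t$. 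Only then does the MDFM factorization give $\J{Y(t)}^{n/2}\|v(t)\|_{L^\infty}\le C\|v_0\|_{H^{s,s}}+C\|v\|_{X_{T_1}}^p$. In other words, the dissipativity is used decisively at the $L^\infty$ level of the profile, not only in the $L^2$ identity; your proposal misses this step. A minor related point: in your $H^s$ energy estimate there is no nonpositive term to ``discard'' once $s>0$, since $\J{\nabla}^s$ does not commute with $|v|^{p-1}v$; you simply bound the nonlinear term by the fractional chain rule, as the paper does.
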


\begin{proof}[Proof of Lemma \ref{lem;a priori}]
To prove the estimate \eqref{eqn;bound},
it suffices to show that there exists a constant $C>0$ such that
\begin{align}
\label{eqn;a priori}
    \|v\|_{X_{T_1}} 
        \le  C\|v_0\|_{H^{s,s}} + C\|v\|_{X_{T_1}}^{p}.
\end{align}
The estimate \eqref{eqn;a priori} yields the uniform bound \eqref{eqn;bound}
if we restrict the size of initial data. 
The existence of the global solution to \eqref{eqn;NLSv}
is an immediate consequence
of the a priori bound \eqref{eqn;bound}.
The  local existence theorem for the problem \eqref{eqn;dNLS}
proved by Kawamoto-Muramatsu \cite{KM-JEE-2021}
(cf. \cite{G-V-JFA-1979},  \cite{Y-CMP-1987}, \cite{C-W-MM-1988} 
and \cite{K-T-AJM-1998}).
We estimate
$\||\N|^sv(t)\|_{L^2}$ and $\||J|^s(t)u(t)\|_{L^2}$
by the energy method  due to  Hayashi-Naumkin \cite{HN-AJM-1998}
(see also \cite{K-L-S-DIE-2014}, \cite{K-AA-2016}).
We prove the a priori bound in \eqref{eqn;a priori}. 
Here we only consider the case where $p$ is critical in the sense of $y_2(t)$, because the super-critical case can be shown more easily. In the followings, we employ the Duhamel formula; 
\begin{align*}
v(t) &= U(t,T_0) v_0 -i \lambda \int_{T_0}^t U(t,\t)  |u (\t)|^{p-1} u(\t) d\t
\\ & 
= U_Y(t) U_Y(T_0)^{-1} v_0 -i \lambda \int_{T_0}^t U_Y(t) U_Y (\t)^{-1}  |u (\t)|^{p-1} u(\t) d\t.
\end{align*} 
We first estimate the derivative term $\|v(t)\|_{\dot{H}^s}$ such that for any $t \in [T_0, T_1]$,
\begin{align*}
\nn \|v(t)\|_{\dot{H}^s}
 \le& \,\|v(T_0)\|_{\dot{H}^s}
    +|\lam|
   \int_{T_0}^t \left| y_1 (\tau) \right|^{-\frac{n(p-1)}{2}} 
       \||v(\tau) |^{p-1} v(\tau)\|^{p}_{\dot{H}^s}\, 
   d\tau\\
\nn
  \le&\, C\|v_0\|_{\dot{H}^s} + C
      \int_{T_0}^t \left| y_1 (\tau) \right|^{-\frac{n(p-1)}{2}} 
            \|v(\tau)\|^{p-1}_{L^\infty}\|v(\tau)\|_{\dot{H}^s}\,d\tau\\
  \le&\,C\|v_0\|_{\dot{H}^s} + C{\|v\|_{X_{T_1}}^{p-1}}
      \int_{T_0}^t \left| y_1 (\tau) \right|^{-\frac{n(p-1)}{2}}  
          \J{Y(\tau)}^{-\frac{n(p-1)}{2}} \|v(\tau)\|_{\dot{H}^s}\,d\tau \\
        \nn
    \le&\,C\|v_0\|_{\dot{H}^s} + C{\|v\|_{X_{T_1}}^{p-1}}
        \int_{T_0}^t 
           \left| y_2 (\tau) \right|^{-\frac{n(p-1)}{2}}  
            \|v(\tau)\|_{\dot{H}^s}\,
       d\tau,
\end{align*}
where $Y(t)=y_2(t)/2y_1(t)$ and we use $\| U_Yu \|_{\dot{H}^s} = \| u \|_{\dot{H}^s}$.
The definition \eqref{eqn;X_{T_1}} and \eqref{eqn;critical} yields 
\begin{align}
    \left\| v(t) \right\|_{\dot{H}^s} 
          \leq \|v_0\|_{\dot{H}^s} 
              + \left\| v \right\|^p_{X_{T_1}} \J{Y(t)}^{\ep_1}.
    \label{eqn;del-X_1}
\end{align}

We next estimate as $\| |J|^s(t)v(t)\|_{L^2}$. Using the commutative relation $[i\pt_t + \frac{1}{2y_1^2}\pt^2_x, |J|^s(t)]=0$ 
and the expression of the operator $|J|^s(t)$ such that
$
|J|^s(t)=U_Y(t)|x|^s U_Y(t)^{-1},
$ 
we also have
\begin{align*}%\label{eqn;anal-galirei}
\| |J|^s(t)v(t)\|_{L^2}
\le&\, \| |x|^s U_Y(T_0)^{-1} v_0 \|_{L^2} +|\lam|
   \int_{T_0}^t \left| y_1 (\tau) \right|^{-\frac{n(p-1)}{2}} 
       \||x|^s U_Y(\tau )^{-1}|v(\tau) |^{p-1} v(\tau)\|_{L^2}\, 
   d\tau \nonumber \\
   \le&\,C\|v_0\|_{H^{s,s}}+ C{\|v\|_{X_{T_1}}^{p-1}}
              \int_{T_0}^t 
                    \left| y_1 (\tau) \right|^{-\frac{n(p-1)}{2}}  
                    \J{Y(\tau)}^{-\frac{n}{2}(p-1)} \||J|^s(\tau)v(\tau)\|_{L^2}\,
               d\tau  \\
     \le&\,\| v_0\|_{H^{s,s}}+ C{\|v\|_{X_{T_1}}^{p-1}}
              \int_{T_0}^t 
                    \left| y_2 (\tau) \right|^{-\frac{n(p-1)}{2}}  
                    \||J|^s(\tau)v(\tau)\|_{L^2}\,
               d\tau \nonumber 
 \end{align*}
The definition \eqref{eqn;X_{T_1}} and \eqref{eqn;critical} yields 
 \begin{align}
 \| |J|^s(t)v(t)\|_{L^2} 
     \leq  \left\|  v_0 \right\|_{H^{s,s}}
            + \left\| v \right\|^p_{X_{T_1}} \J{Y(t)}^{\ep_1}. 
     \label{eqn;J-X_1}
 \end{align}
 
 Finally, we consider the $L^\infty$-bound 
of the solution to \eqref{eqn;NLSv}:
 \begin{align}
 \label{eqn;|v|-t-geq-1}
  \J{y_2(t)}^{\frac{n}{2}}
       \|v(t)\|_{L^\infty}
           \le C\|v_0\|_{H^{s,s}}
               +C\|v\|_{X_{T_1}}^p.
\end{align}
To this end,
we first show the uniform bound of 
$$\tilde{v} \equiv \F[U_Y(t)^{-1} v]. $$
Namely, for any $t \in [T_0,T_1]$, we show
\begin{align}
\label{eqn;unif-apri2}
  \|\tilde{v}(t)\|_{L^\infty} 
     \le C\|v_0\|_{H^{s,s}}  
           +C \|v\|^{p}_{X_{T_1}}.
\end{align}
If we show the uniform bound \eqref{eqn;unif-apri2},
we obtain the estimate \eqref{eqn;|v|-t-geq-1}
by applying the Fourier transform to \eqref{eqn;NLSv}
in such a way that
\begin{align}
\label{eqn;anal-profile} 
i\pt_t \tilde{v}(t,\xi) 
   =
   \,& \F \Big[U_Y(t)^{-1}
                  \Big(i\pt_t v + \frac{1}{2y_1^2}\Del v\Big)
               \Big]
     =\,\frac{\lam}{|y_2(t)|^{\frac{n}{2}(p-1)}}\F \Big[U_Y(t) ^{-1}
              \Big(
                  |v|^{p-1} v
             \Big)  \Big]\\
\nn    =& \frac{\lam}{|y_2(t) |^{\frac{n}{2}(p-1)}}\,
          |\tilde{v}(t,\xi)|^{p-1}
          \tilde{v}(t,\xi) + R(t,\xi),
\end{align}
where $R$ is a remainder term given by
\begin{align}\label{eqn;R}
R(t)=\frac{\lam}{|y_2(t)|^{\frac{n}{2}(p-1)}}\,\F \Big[U_Y(t) ^{-1}
                 \Big(
                       {|v|^{p-1}v)}
                 \Big)         \Big]
              - \frac{\lam}{|y_2(t)|^{\frac{n}{2}(p-1)}}
                   |\tilde{v}|^{p-1}\tilde{v}.
\end{align}
Multiplying the both sides of the equation \eqref{eqn;anal-profile} 
by $\overline{\tilde{v}}$
and taking the imaginary part, we obtain
\begin{align}
\label{eqn;V-W}
 \frac{1}{2}\pt_t|\tilde{v}(t,\xi)|^2
          =&\,\frac{\text{Im}\,\lam}{|y_2(t)|^{\frac{n}{2}(p-1)}}
                     |\tilde{v}(t,\xi)|^{p+1}
               + \text{Im}\,R(t,\xi)(\overline{\tilde{v}(t,\xi)}).
\end{align}
From the condition $\text{Im}\, \lam < 0$, 
we see that the first term 
of the right hand side
in \eqref{eqn;V-W} are non-positive. 
Therefore, by noting $\partial _t |f(t) |^2 = 2 |f(t)| \partial _t |f (t)|  $ for $f(t) \in C^1 (\mathbb{R}\,;\,\mathbb{C})$, we obtain 
\begin{align}\label{eqn;V}
  \frac{1}{2}
      \pt_t|\tilde{v}(t,\xi)|%^2 
          \le  |R(t,\xi)|, %|\F[U^{-1}(t,0)v](t,\xi)|,
       \quad T_0 \leq t \leq T_1,\,\,\xi \in \re^n.
\end{align}
Integrating the both sides of the inequality \eqref{eqn;V}
over $[T_0,t]$, $t \leq  T_1$, we see that
\begin{align}
\label{eqn;int-V}
     |\tilde{v}(t,\xi)|
           \le \,|\tilde{v}(T_0,\xi)|
               + 2\int_{T_0}^{T_1}
                      |R(\tau,\xi)|\, 
                 d\tau.
\end{align}

By applying the estimate \eqref{eqn;R-decay1} in Lemma \ref{lem;R-decay} 
to the remainder term $R(t,\xi)$, 
we see that for any $t \in [T_0,T_1],\,\,\xi \in \re ^n$,
\begin{align}\label{eqn;R-e^y}
|R(t,\xi)| \le C | y_2(t) |^{-\frac{n}{2}(p-1)} |Y(t)|^{-s_1} 
     (\|v(t)\|_{L^2}+\||J|^s(t)v(t)\|_{L^2})^{p},
\end{align}
where $0< s_1 < \min \{ 1, s -n/2 \} $ and $Y(t)=y_2(t)/2y_1(t)$. 
Combining \eqref{eqn;R-e^y}, \eqref{eqn;J-X_1},
and the $L^2$-a priori bound; 
\begin{align}
\label{eqn;mass}
  \|v(t)\|^2_{L^2} 
       + 2|{\rm Im}\,\lam|  
       \int_0^t |y_1(\t)|^{-\frac{n(p-1)}{2}}\|v(\t)\|^{p+1}_{L^{p+1}}\,d\t
     = \|v_0\|^2_{L^2},
\end{align}
we obtain that for arbitrary small $\ep_1>0$ and $t \geq T_0$,
\begin{align}
\label{eqn;R_1-R_2}
\|R(t)\|_{L^\infty}
   \le\,C|y_2(t)|^{-\frac{n}{2}(p-1)} 
              |Y(t)|^{-s_1} \J{Y(t)}^{p\ep_1}
         \|v\|^{p}_{X_{T_1}}.
\end{align}
There exists $\ep_2>0$ such that for small $\ep_1$,
it follows
 $|Y(t)|^{-s_1} \J{Y(t)}^{p\ep_1} \le Ct^{- \delta (s_1 -p \ep_1)}  \le Ct^{-\ep_2} $, 
where we use \eqref{K10}, and hence
the estimate \eqref{eqn;int-V} yields by \eqref{eqn;R_1-R_2}
and the embedding $H^s (\mathbb{R}^n) \subset L^\infty (\mathbb{R} ^n)$ for $s> n/2$ that
\begin{align}
\label{eqn;V-L^infty2}
\left\| \tilde{v}(t) \right\|_{L^{\infty}} 
   &\leq C \left\| v(T_0)\right\|_{H^s} 
        +C \|v\|^{p}_{X_{T_1}} 
           \int_{T_0}^t 
                |y_2(\tau)|^{-\frac{n}{2}(p-1)} \t^{-\ep_2}
          d \tau 
\\& \leq C \|v_0\|_{H^{s,s}} + C \| v \|_{X_{T_1}}^{p}, \nonumber
\end{align}
where the constant $C>0$ is not depend on $T_1$. Then we have by $U_Y(t) = \CAL{M}(Y(t) ) \CAL{D}(Y(t)) \CAL{F} \CAL{M}(Y(t))$  in \eqref{eqn;MDFM} that
\begin{align}
\left\| v(t) \right\|_{L^{\infty}} 
   &=\left\| U_Y(t) U_Y(t)^{-1} v(t)\right\|_{L^{\infty}} \nonumber\\
   &= |Y (t)|^{-n/2} 
           \left\| 
               \CAL{F}[ \CAL{M}(Y(t)) U_Y(t)^{-1}] v(t) 
           \right\|_{L^{\infty}} \label{eqn;point-uv}\\
  & \le |Y(t)|^{-n/2}
        \left(
            \left\| \CAL{F}U_Y(t)^{-1} v(t) \right\|_{L^{\infty}} 
        + \left\|
                \CAL{F} [(\CAL{M}(Y(t))-1) U_Y(t)^{-1} v](t) 
          \right\|_{L^{\infty}} 
       \right).  \nonumber
\end{align}
Using 
$ 
   \| ( \CAL{M}(Y(t))  -1)f \|_{L^1} 
      \leq C |Y(t)|^{-s_1} \| \J{x}^{s} f\|_{L^2} 
$, 
we obtain 
\begin{align}
  \| v(t) \|_{L^\infty} 
       \leq C |Y(t)|^{-n/2} 
       \left(  \|v_0\|_{H^{s,s}} + |Y(t)|^{- (s_1 -p \ep_1)} \| v \|_{X_{T_1}}^{p} \right).
   \label{eqn;point-decay}
\end{align}
Therefore, we obtain the $L^\infty$-estimate \eqref{eqn;|v|-t-geq-1}.

From inequalities \eqref{eqn;del-X_1},
\eqref{eqn;J-X_1} and \eqref{eqn;|v|-t-geq-1},
the a priori bound \eqref{eqn;a priori} holds and
we obtain \eqref{eqn;bound}
for sufficiently small initial data.
\end{proof}

\begin{lem}\label{lem;L^2-a priori}
Let $u$ be the solution to \eqref{eqn;NLS-y_1} satisfying \eqref{eqn;bound}.
Then, there exists $C>0$ such that for any $t \geq T_0$, 
\begin{align}\label{eqn;H^m-apri}
\|v(t)\|_{\dot{H}^{\frac{s}{2}}} \le C\|v_0\|_{{H}^{s,s}}.
\end{align}
\end{lem}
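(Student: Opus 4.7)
A direct interpolation $\|v\|_{\dot H^{s/2}} \le \|v\|_{L^2}^{1/2} \|v\|_{\dot H^s}^{1/2}$ is inadequate, since the a priori bound \eqref{eqn;bound} only yields $\|v(t)\|_{\dot H^s} \le C\|v_0\|_{H^{s,s}} \J{Y(t)}^{\ep_1}$, which grows in $t$. My plan is instead to run an energy estimate directly at the $\dot H^{s/2}$-level in Fourier space, exploiting the dissipative sign of $\mathrm{Im}\,\lam$.

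Multiplying the profile equation \eqref{eqn;anal-profile} for $\tilde v(t,\xi) = \F[U_Y(t)^{-1}v(t)](\xi)$ by $|\xi|^s \overline{\tilde v(t,\xi)}$, integrating over $\xi \in \re^n$, and taking the imaginary part, I would obtain
\[
\tfrac12 \pt_t \|v(t)\|_{\dot H^{s/2}}^2 = \mathrm{Im}\,\lam\, |y_2(t)|^{-\frac{n(p-1)}{2}}\int_{\re^n}|\xi|^s |\tilde v|^{p+1} d\xi + \int_{\re^n}|\xi|^s \mathrm{Im}\bigl(R\,\overline{\tilde v}\bigr) d\xi,
\]
using $|\tilde v(t,\xi)| = |\hat v(t,\xi)|$ so that $\int |\xi|^s |\tilde v|^2 d\xi = \|v\|_{\dot H^{s/2}}^2$. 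Because $\mathrm{Im}\,\lam < 0$, the first term is non-positive and can be discarded; Cauchy-Schwarz in $\xi$ then gives the key inequality
\[
\pt_t \|v(t)\|_{\dot H^{s/2}} \le \left(\int_{\re^n}|\xi|^s |R(t,\xi)|^2 d\xi\right)^{1/2}.
\]
The problem is thereby reduced to establishing a weighted $L^2$-analog of \eqref{eqn;R-decay2}, namely
\[
\left(\int_{\re^n}|\xi|^s |R(t,\xi)|^2 d\xi\right)^{1/2} \le C\, |y_2(t)|^{-\frac{n(p-1)}{2}}\, |Y(t)|^{-\theta'/2}\, \bigl(\|v\|_{L^2} + \||J|^s(t) v\|_{L^2}\bigr)^p
\]
for some $\theta'>0$, which I would derive by following the decomposition $R=R_1+R_2$ used in the proof of Lemma~\ref{lem;R-decay}, extracting the smallness $|e^{-i|x|^2/(2Y(t))}-1|\le C(|x|^2/|Y(t)|)^{\theta'/2}$, and applying the fractional Leibniz/chain rules of Christ-Weinstein and Kenig-Ponce-Vega together with Gagliardo-Nirenberg and Sobolev embeddings, the additional $|\xi|^s$-weight being absorbed by the $\||J|^s v\|_{L^2}$-control already in the $X_{T_1}$-norm.

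Combining the weighted estimate with $\|v\|_{X_{T_1}} \le C\|v_0\|_{H^{s,s}}$ from Lemma~\ref{lem;a priori} bounds the right-hand side by $C\|v_0\|_{H^{s,s}}^p\, |y_2(t)|^{-n(p-1)/2}\, |Y(t)|^{-\theta'/2}\, \J{Y(t)}^{p\ep_1}$. Under the critical assumption \eqref{eqn;critical} and condition (C)---which forces $|Y(t)| \gtrsim t^{\del}$---this is $\lesssim t^{-1 - (\theta'/2 - p\ep_1)\del}$, hence integrable on $[T_0,\infty)$ provided $\ep_1$ is chosen sufficiently small relative to $\theta'$ and $\del$. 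Integrating in $t$ from $T_0$ then yields the uniform bound $\|v(t)\|_{\dot H^{s/2}} \le C\|v_0\|_{H^{s,s}}$. The main obstacle is the weighted $L^2$-estimate on $R$: in the tight Sobolev range for $n = 2, 3$ with $s \in (n/2, p)$, the fractional chain rule applied to $|v|^{p-1}v$ under the $|\xi|^s$-weight operates close to the boundary of admissible Gagliardo-Nirenberg exponents, and closing it rests essentially on the uniform $\||J|^s v\|_{L^2}$-bound provided by the $X_{T_1}$-norm---precisely the feature that distinguishes this argument from the earlier works \cite{O-S-NoDEA-2020}.
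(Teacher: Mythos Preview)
Your overall strategy is sound, but the paper takes a simpler route that avoids the chief difficulty you flag. After reaching the same inequality
\[
\tfrac12\,\pt_t\int_{\re^n}|\xi|^{s}|\tilde v(t,\xi)|^{2}\,d\xi \;\le\; \int_{\re^n}|\xi|^{s}\,|R(t,\xi)|\,|\tilde v(t,\xi)|\,d\xi,
\]
the paper applies H\"older \emph{asymmetrically}: it places the entire weight $|\xi|^{s}$ on $\tilde v$, obtaining
\[
\int_{\re^n}|\xi|^{s}|R||\tilde v|\,d\xi \;\le\; \|R(t)\|_{L^{2}}\,\bigl\||\xi|^{s}\tilde v(t)\bigr\|_{L^{2}} \;=\; \|R(t)\|_{L^{2}}\,\|v(t)\|_{\dot H^{s}}.
\]
This uses only the \emph{unweighted} estimate \eqref{eqn;R-decay2} for $\|R\|_{L^{2}}$, already proved in Lemma~\ref{lem;R-decay}, paired with the $\dot H^{s}$ bound $\|v(t)\|_{\dot H^{s}}\le C\J{Y(t)}^{\ep_1}\|v_0\|_{H^{s,s}}$ from the a~priori estimate \eqref{eqn;bound}. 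The mild growth $\J{Y(t)}^{\ep_1}$ is absorbed by the extra decay $|Y(t)|^{-\theta/2}$ in \eqref{eqn;R-decay2}, and the product is integrable on $[T_0,\infty)$.

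Your symmetric splitting $\int|\xi|^{s}|R||\tilde v|\le \||\xi|^{s/2}R\|_{L^2}\||\xi|^{s/2}\tilde v\|_{L^2}$ instead requires the new weighted bound $\||\xi|^{s/2}R(t)\|_{L^2}\lesssim |y_2(t)|^{-n(p-1)/2}|Y(t)|^{-\theta'/2}(\cdots)^{p}$, which is not in the paper and which you yourself identify as the main obstacle. It is plausibly derivable along the lines you sketch, but it is genuinely extra work (fractional chain rule at order $s/2+\theta'$ close to the admissible boundary for $n=2,3$), and it buys nothing: the asymmetric H\"older in the paper closes the argument with tools already on the table. So your proof is not wrong, but it is harder than necessary; the key simplification you are missing is to let the already-controlled $\dot H^{s}$ norm of $v$ carry the full weight and leave $R$ unweighted.
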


\begin{proof}[Proof of Lemma \ref{lem;L^2-a priori}.]
Multiplying \eqref{eqn;V-W} by $|\xi|^s$ 
and integrating \eqref{eqn;V-W} over $ [T_0,t] $, 
we have that
\begin{align*}
\frac{1}{2} \left| |\xi|^{\frac{s}{2}} \tilde{v} (t, \xi)  \right|^2 \leq \frac{1}{2} \left| |\xi|^{\frac{s}{2}} \CAL{F}^{} U_Y(T_0)^{-1} v_0  \right|^2 + \int_{T_0}^t \left| R(t, \xi) |\xi|^s \overline{\tilde{v}(\t , \xi)} \right| d \tau
\end{align*}
and integrating it over $\R$ in $\xi$ and using H\"{o}lder's inequality, we have
\begin{align}
\label{eqn;m-L^2-2}
       \||\xi|^{\frac{s}{2}}\widehat{v}(t)\|_{L^2}^2
           \le \,\|v_0\|_{\dot{H}^{\frac{s}{2}}}^2
               +2\int_{T_0}^t
                      \|R(\tau)\|_{L^{2}}\||\xi|^{s}\widehat{v}(\t)\|_{L^2}\,
                 d\tau,
\end{align}
where we use 
\begin{align*}
\left\| |\xi|^{\frac{s}{2}} \tilde{v} (t, \xi)  \right\|_{L^2_{\xi}} = \left\| e^{-i \frac{y_2(t)}{2y_1(t)} |\xi|^2 }|\xi|^{\frac{s}{2}} \left[ \CAL{F} {v } \right] (t, \xi)  \right\|_{L^2_{\xi}} = \left\| |\xi|^{\frac{s}{2}} \left[ \CAL{F} {v} \right] (t, \xi)  \right\|_{L^2_{\xi}}.
\end{align*}
By the estimate \eqref{eqn;bound}, it follows
\begin{align}\label{eqn;s/2-L^2}
 \||\xi|^{s}\widehat{v}(t)\|_{L^2}
   \le C|Y(t)|^{\ep_1}\|v\|_{X_{T_1}} \le C|Y(t)|^{\ep_1}\|v_0\|_{H^{s,s}}
\end{align}
and \eqref{eqn;R-decay2} yields the integrability of 
\begin{align*}
\|R(\tau)\|_{L^{2}}\||\xi|^{s}\widehat{v}(\t)\|_{L^2} \leq C |y_2(\t)|^{-\frac{n}{2}(p-1) } \J{Y(\t)}^{-(\frac12 - 2\ep_1)} \| v_0 \|_{H^{s,s}}.
\end{align*}
Hence, by \eqref{eqn;s/2-L^2}, we obtain the uniform bound \eqref{eqn;H^m-apri}.
\end{proof}

\section{Proof of the main Theorems}
In this section, we prove $L^2$-decay of dissipative solutions to \eqref{eqn;dNLS}.
In our proof, we employ the previous approach by \cite{H-L-N-AMP-2016} besides the frequency dividing approach due to \cite{O-S-NoDEA-2020}. 
Such approach enable us to remove an extra exponent $\ep>0$ appeared in $L^2$-decay order in \cite{H-L-N-AMP-2016}.
By applying Lemma \ref{lem;L^2-a priori}, 
one can extract the $L^2$-decay of dissipative solutions to \eqref{eqn;dNLS}.
We have that this $L^2$-decay holds 
for higher dimensions $n=1,2,3,$
even if we only assume Im $\lam<0$.
Moreover this $L^2$-decay property can be seen for the problem \eqref{eqn;dNLS} which contains the time dependent potential.
\begin{proof}[Proof of Theorem \ref{thm;L^2-decay}]
Let $1 \le n \le 3$, $p >1$ be the critical or super-critical and $n/2<s<p$.
We first show the pointwise decay \eqref{eqn;Point-decay}.
By \eqref{eqn;R-decay1}, 
the error term \eqref{eqn;R} is estimated by
\begin{align}
\label{eqn;R-decay}
  \|R(t)\|_{L^\infty} 
        \le C |y_2(t)|^{-\frac{n}{2}(p-1)} |Y(t)|^{-(s_1 -p \ep_1)}\| v_0 \|_{{H}^{s,s}}^p , 
      \quad t \geq T_0,
\end{align}
where $Y(t)=y_2(t)/2y_1(t)$.
Let $Y_2(t)=\int_{T_0}^t |y_2(\t)|^{-\frac{n}{2}(p-1)}\,d\t$
and $\tilde{v}(t,\xi)=\F [U_Y(t)^{-1}v](t,\xi)$.
Then, the differential equation \eqref{eqn;V-W} and the inequality 
\begin{align*}
\left| 
R(t) \overline{\tilde{v}(t,\xi)}
\right| &\leq C|y_2 (t) | ^{- \frac{n}{2} (p-1) } \J{Y(t)}^{-(s_1-p \ep_1)} \| v_0 \|_{H^{s,s}} \left( \left\| v_0 \right\|_{H^{s,s}} + \left\| v \right\|_{X_{T_1}}^p \right) 
\\ & \leq C|y_2 (t) | ^{- \frac{n}{2} (p-1) } \J{Y(t)}^{-(s_1-p \ep_1)} \| v_0 \|_{H^{s,s}}^{p+1} 
\end{align*} 
lead to
\begin{align*}
&  \frac{d}{dt}\Big(Y_2(t)^{\frac{2}{p-1}+1}|\tilde{v}(t,\xi)|^2\Big)
    \\  =&\, \Big(\frac{d}{dt}Y_2(t)^{\frac{2}{p-1}+1}\Big)
          |\tilde{v}(t,\xi)|^2
        + Y_2(t)^{\frac{2}{p-1}+1} \frac{d}{dt} |\tilde{v}(t,\xi)|^2
        \nonumber \\
    \le&\,\,C|y_2(t)|^{-\frac{n}{2}(p-1)} 
                    Y_2(t)^{\frac{2}{p-1}}
          |\tilde{v}(t,\xi)|^2 \\
        &+ Y_2(t)^{\frac{2}{p-1}+1} 
           \bigg(
                \frac{-|\text{Im}\,\lam|}{|y_2(t)|^{\frac{n}{2}(p-1)}}
                     |\tilde{v}(t,\xi)|^{p+1}
               + C|y_2(t)|^{-\frac{n(p-1)}{2}}|Y(t)|^{-(s_1 -p\ep_1)} \left\| v_0 \right\|_{H^{s,s}}^{p+1}
           \bigg).
      \nonumber
\end{align*}
By the Young inequality with $\frac{p-1}{p+1}+\frac{2}{p+1}=1$, there exists $C>0$ such that for small $\ep>0$,
\begin{align*}
    |\tilde{v}(t,\xi)|^2 = \left( \ep {Y_2(t)} \right)^{- \frac{2}{p+1}}  \cdot \left( \ep {Y_2(t)} \right)^{ \frac{2}{p+1}}|\tilde{v}(t,\xi)|^2
       \le  C\ep^{-\frac{2}{p-1}} Y_2(t)^{-\frac{2}{p-1}}
           + C\ep^{} Y_2(t) |\tilde{v}(t,\xi)|^{p+1}.
\end{align*}
Hence, we have by taking $\ep = |\mathrm{Im} \lambda| \ep_3$ with small $\ep _3 >0$ that
\begin{align}
  \frac{d}{dt}\Big(Y_2(t)^{\frac{2}{p-1}+1}|\tilde{v}(t,\xi)|^2\Big)
     \le&\, C(\ep_3 |\mathrm{Im} \lambda|)^{-\frac{2}{p-1}}
                   |y_2(t)|^{-\frac{n(p-1)}{2}} \nonumber\\
        &+ CY_2(t)^{\frac{2}{p-1}+1} 
              |y_2(t)|^{-\frac{n(p-1)}{2}} |Y(t)|^{-(s_1 -p\ep_1)} \left\| v_0 \right\|_{H^{s,s}}^{p+1}
      \label{eqn;bibun1}
\end{align}
and integrating \eqref{eqn;bibun1} over $[T_0,t]$, we deduce that
\begin{align*}
    Y_2(t)^{\frac{2}{p-1}+1}|\tilde{v}(t,\xi)|^2
     \le&\, C(\ep_3 |\mathrm{Im}  \lambda |)^{-\frac{2}{p-1}} Y_2(t)\\
        &+ C \left\| v_0 \right\|_{H^{s,s}}^{p+1} 
        \int_{T_0}^t  Y_2(\t)^{\frac{2}{p-1}+1} 
                    |y_2(\t)|^{-\frac{n(p-1)}{2}} |Y(\t)|^{-(s_1 -p\ep_1)} 
              \,d\t \nonumber\\
       \le&\,C(\ep_3 |\mathrm{Im} \lambda |)^{-\frac{2}{p-1}} Y_2(t)
             + C \left\| v_0 \right\|_{H^{s,s}}^{p+1},
\end{align*}
where the second  part is integrable since $p$ is the critical and the assumption (C) holds.
Thus, we have that for any $t \ge T_0$,
\begin{align}
\label{eqn;po-decay}
  \|\tilde{v}(t)\|_{L^\infty}^2
     \le C \left( (\ep_3  | \mathrm{Im} \lambda |)^{-\frac{2}{p-1}} + Y_2(t)^{-1} \left\| v_0 \right\|_{H^{s,s}}^{p+1}\right) Y_2(t)^{-\frac{2}{p-1}}
\end{align}
and we obtain by  \eqref{eqn;point-uv} and $\|u(t)\|_{L^\infty}=|y_1|^{-\frac{n}{2}}\|v(t)\|_{L^\infty}$ that
\begin{align*}
\nn   \|u(t)\|_{L^{\infty}} 
     \le &\, C|y_1(t)Y(t)|^{-\frac{n}{2}}
           (\|\tilde{v}(t)\|_{L^\infty} + |Y(t)|^{-(s_1-p\ep_1)}\|v\|_{X_{T_1}}^p)\\
      \le &\,C|y_2(t)|^{-\frac{n}{2}} Y_2(t)^{- \frac{1}{p-1}}
          \left( ( \ep_3 |\mathrm{Im} \lambda |)^{-\frac{1}{p-1}} 
          + Y_2(t)^{-1} \left\| v_0 \right\|_{H^{s,s}}^{p+1}  \right)^{1/2} \\
\nn          &+ |y_2(t)|^{- \frac{n}{2}}|Y(t)|^{-(s_1-p \ep_1)} 
          \left\| v_0 \right\|_{H^{s,s}}^{p} ,
\end{align*}
where $|Y(t)|=|y_2(t)/2y_1(t)| \le C t^{-\del}$ and $\ep_2>0$ is the same one in \eqref{eqn;V-L^infty2}.

\vspace{2mm}
We prove the $L^2$-decay of solutions to \eqref{eqn;dNLS} with higher regular condition $s > n/2$.
We decompose the  low-frequency and high frequency part of the solution.
The low frequency part is controlled by the pointwise estimate \eqref{eqn;po-decay}
and high frequency part is estimated by the uniform estimate.
Namely, the solution $u$ satisfies that for any $t \ge T_0$ and $r>0$, 
\begin{align*}
  \|u(t)\|^2_{L^2}
     =&\, \|\tilde{v}(t)\|^2_{L^2(|\xi|\le r)}
         +\|\tilde{v}(t)\|^2_{L^2(|\xi|>r)} \nonumber\\
         \leq & Cr^n \left\| \tilde{v}(t) \right\|^2_{L^{\infty}} + Cr^{-s} \left\| |\xi|^{\frac{s}{2}} \tilde{v}(t)  \right\|^2_{L^2(|\xi| > r)}
         \\
     \le&\,Cr^nY_2(t)^{-\frac{2}{p-1}}\left( (\ep_3  | \mathrm{Im} \lambda |)^{-\frac{2}{p-1}} + Y_2(t)^{-1} \left\| v_0 \right\|_{H^{s,s}}^{p+1}\right)
                   + Cr^{-s}
           \||\xi|^{\frac{s}{2}}\widehat{v}(t)\|^2_{L^2}
           \\  \leq &
           Cr^nY_2(t)^{-\frac{2}{p-1}}\left( (\ep_3  | \mathrm{Im} \lambda |)^{-\frac{2}{p-1}} + Y_2(t)^{-1} \left\| v_0 \right\|_{H^{s,s}}^{p+1}\right)
                   + Cr^{-s}
           \|v_0\|^2_{H^{s,s}}
           ,
\end{align*}
where $Y_2(t)=\int_{T_0}^t |y_2(\t) |^{-\frac{n}{2}(p-1)}\,d\t$ and we use Lemma \ref{lem;L^2-a priori}. This inequality is optimized by taking 
$r=(|\mathrm{Im} \lambda |Y_2(t))^{\frac{2}{(p-1)(s+n)}} \| v_0 \|_{H^{s,s}} ^{\frac{2}{n+s}} $.
Namely, we are valid that
\begin{align*}
   \|u(t)\|_{L^2} \le C (|\mathrm{Im} \lambda |  Y_2(t))^{-\frac{s}{(p-1)(s+n)}}  \| v_0 \|_{H^{s,s}}^{\frac{n}{n+s}} + C|\mathrm{Im} \lambda| ^{\frac{n}{(p-1) (s+n)}} Y_2(t)^{-\frac{s}{(p-1) (s+n)} - \frac12} \| v_0 \|_{H^{s,s}}^{\frac{p+1}{2}} .
\end{align*}
We next consider the uniform lower bound when $p$ is the super-critical case.
Let $Y(t)=\frac{y_2(t)}{2y_1(t)}$,
$J(t)f=(x+iY(t)\N)f$, $U(t,T_0) \equiv U_Y(t)^{-1} U_Y(T_0) $, $U_Y(t) = e^{ i Y(t) \Delta }$
and let $v$ be the solution to \eqref{eqn;NLSv} 
with $v_0 \in H^{s,s}(\re^n)$
given via a priori estimate \eqref{eqn;Upper-bound}.
Since the pointwise estimate 
\begin{align}
\label{eqn;infty-est}
  \|v(t)\|_{L^\infty} 
   \le C\|v_0\|_{H^{s,s}} |Y(t) |^{-\frac{n}{2}}
     +C\|v_0\|_{H^{s,s}}^{p} |Y(t) |^{-\frac{n}{2}}
        |Y(t) |^{-(s_1-p\ep_1)}
\end{align}
holds, one can apply the similar argument in \cite{KS-AA-2021}. 
we deduce by combining \eqref{eqn;infty-est} and the 
$L^2$-dissipative identity:
\begin{align}\label{eqn;mass2}
\|v(t)\|^2_{L^2}  
    = \|v_0\|^2_{L^2}
           - |{\rm Im}\,\lam|
                   \int_0^t
                        |y_1(\t)|^{-\frac{n}{2}(p-1)}
                        \|v(\tau)\|^{p+1}_{L^{p+1}}\,
                   d\tau, 
    \quad t \geq 0,
\end{align}
that
\begin{align}
\label{eqn;L1}
  \frac{d}{dt}\|v(t)\|_{L^2}^2
        \ge&\, 
               -|{\rm Im}\,\lam| |y_1(t)|^{-\frac{n}{2}(p-1)} 
                           \|v(t)\|^{p-1}_{L^\infty}
                         \|v(t)\|_{L^2}^{2}\\
  \nn      \geq&\,
     - C |\mathrm{Im} \lambda| |y_2(t)|^{- \frac{n}{2}(p-1)} \left( \|v_0\|_{H^{s,s}}
         + t^{- \ep_2}  \|v_0\|_{H^{s,s}}^{p}   \right)
        \|v(t)\|^{2}_{L^{2}}.
\end{align}
By solving \eqref{eqn;L1}, one can obtain the $L^2$-lower bound for small solutions to \eqref{eqn;NLSv} since $y_2$ satisfies \eqref{eqn;sg-equation} and
since $\|v(t)\|_{L^2}=\|u(t)\|_{L^2}$, we obtain the $L^2$-lower bound \eqref{eqn;Lower-bound} for the original solution to \eqref{eqn;dNLS}.
\end{proof}

\vspace{2mm}
\begin{proof}[Proof of Theorem \ref{thm;L^2-decay2}]
We show the $L^2$-decay of solutions for $n \geq 1$ or under the low regularities for $u_0$. By the low regular condition $s=1$, the pointwise estimate \eqref{eqn;po-decay} does not hold for the higher spatial dimensions $n \ge 2$.
Then, one can not prove the $L^2$-decay estimate in the same way of the proof of Theorem \ref{thm;L^2-decay}.
Hence, we need to improve the estimate of the low-frequency part by considering the asymptotic form of the solution to \eqref{eqn;dNLS}.
Let $n \ge 1$ and $p >1$, $\lam$ satisfy \eqref{eqn;SD} and $r>0$. Under the assumption \eqref{eqn;SD}, we find as the consequence of \cite{H-L-N-CPAA-2017} that for $t \geq T_0$,
\begin{align}
\left\| v(t) \right\|_{L^2}^2 
    + \left\| v(t) \right\|_{\dot{H}^1}^2 
        + \left\| J(t) v(t) \right\|_{L^2}^2 
  \leq   \left\| v(T_0) \right\|_{L^2}^2 
              + \left\| v(T_0) \right\|_{\dot{H}^1}^2 
         + \left\| J(T_0) v(T_0) \right\|_{L^2}^2.
   \label{eqn;unif-bound}
\end{align}
Here the problem occurs when we consider the existence and boundedness of the time-in-local solution from $t=0$ to $t= T_0$ since even the solution of the free equation $i\partial _t u = H_0(t) u$ does not have $H^1$ conservation law (conservation law of $(\zeta_2(t) D_x - \zeta _2 '(t)x )^2$ is known, see, section 5 of \cite{Ca} or Lemma 2.4 of \cite{KM-JEE-2021}). 
Let $k \in {\mathbb{N}} $ and set the energy norm 
\begin{align*}
\left\|  u  \right\|_{Y_{kT'}}
   \equiv \sup_{(k-1) T' \leq t \leq k T'} 
      \left( 
            \left\| u(t) \right\|^2_{\dot{H}^1} 
            + \left\| u(t) \right\|^2_{H^{0,1}}
     \right)^{\frac{1}{2}},
\end{align*}
where $T'$ is a small positive constant so that $0<T' < 1/64$ and
\begin{align*}
T'  \sup_{\t \in \R}|\sg(\t)| < \frac1{64}
\end{align*}
and show that for any $k \in \mathbb{N}$
\begin{align} \label{K21}
\left\| u \right\|_{Y_{kT'}} 
    \leq 4^{k-1} \left\| u_0 \right\|_{H^{1,1}}.
\end{align}
Under the assumption \eqref{eqn;SD}, we can employ the approach of \cite{H-L-N-CPAA-2017} and then the straightforward calculation shows
\begin{align*}
   \frac12 \partial _t  
    \left\| \nabla u(t) \right\|^2_{L^2} 
        &= -2 \sigma(t)  \mathrm{Im} \int_{\R ^n} 
                   \left( x \cdot \nabla u \right) \overline{u} dx
                         + \mathrm{Im} 
                    \left( \lambda 
                                \int_{\R ^n}  \nabla \left( |u|^{p-1} u \right)
                                 \cdot \overline{\nabla u}  dx
                  \right)
\end{align*}
and which deduces
\begin{align*}
   \sup_{(k-1) T' \leq t \leq k T'} 
        \left\| \nabla u(t) \right\|^2_{L^2} 
              & \leq \left\| \nabla u((k-1)T' ) \right\|^2_{L^2}  + 4 \left( \int_{(k-1)T' }^{kT'} |\sigma (\t)| d \t \right)   \left\| u\right\|^2_{Y_{k T'}}
 \\ & \leq  \left\| u \right\|^2_{Y_{(k-1) T'}}   + \frac{1}{16} %\sup_{0 \le t \le T_0}
    \left\| u\right\|^2_{Y_{k T'}}.
\end{align*}
By the similar calculations, we also get
\begin{align*}
\frac12 \partial _t 
    \left\| x u(t) \right\|^2_{L^2} 
        &= 2  \mathrm{Im} \int_{\R ^n} 
               \left( x \cdot \nabla u \right) \overline{u} dx 
         + \mathrm{Im} 
              \left( \lambda \int_{\R ^n} 
                   |x|^2|u|^{p+1}\,  dx
             \right)
\\ & \leq 2 \mathrm{Im} \int_{\R ^n} \left( x \cdot \nabla u \right) \overline{u} dx ,
\end{align*}
and that 
\begin{align*}
   \sup_{(k-1) T' \leq t \leq kT'}
    \left\| x u(t) \right\|^2_{L^2} 
          \leq \left\| u\right\|^2_{Y_{(k-1) T'}}
   + \frac1{16}
       \left\| u\right\|_{Y_{kT'}}^2.
  % \sup_{0 \le t \le T_0}\left\| xu(t)\right\|_{L^2}.
\end{align*}
These deduce 
\begin{align}  \label{K22}
\left\| u \right\|^2_{Y_{kT'}} 
   \leq   4 \left\| u \right\|^2_{Y_{(k-1)T'}}.
\end{align}
By the standard arguments in construction mappings, we have a unique solution $u(t) \in C([ 0, T'] \, ; \, H^{1,1}  )$ by taking $k=1$, and using the induction \eqref{K22}, we have, for each $k$, a unique solution $u(t) \in C([ (k-1)T' , kT'] \, ; \, H^{1,1}  )$. Using such solutions and \eqref{K22}, the desired result \eqref{K21} can be obtained. Since we can choose $k$ arbitrarily, one can also find the existence of a global-in-time unique solution $u(t) \in C([0, \infty) \, ; \, H^{1,1}  )$ without any restrictions of the size of $u_0$. By taking $k_0 \in \mathbb{N}$ as the smallest integer so that $k_0T' > T_0 $
\begin{align*}
\left\| u \right\|^2_{Y_{T_0}} \leq 4^{k_0-1} \| u_0 \|_{H^{1,1}} \leq 4^{T_0/T'} \| u_0 \|_{H^{1,1}},
\end{align*}
where 
\begin{align*}
\left\|  u  \right\|_{Y_{T_0}}
   \equiv \sup_{0 \leq t \leq T_0} 
      \left( 
            \left\| u(t) \right\|^2_{\dot{H}^1} 
            + \left\| u(t) \right\|^2_{H^{0,1}}
     \right)^{\frac{1}{2}}.
\end{align*}
After $t \geq T_0$, one gets the uniform bounds from \eqref{eqn;unif-bound} that  
\begin{align*}
\sup_{T_0 \leq t \leq T_1}  
\left\| v(t) \right\|_{\dot{H}^1}^2 
    \leq&\, \left\| v_0 \right\|_{L^2}^2 + \left\| v(T_0) \right\|_{\dot{H}^1}^2
          + \left\| J(T_0) v(T_0) \right\|_{L^2}^2 \\
    \leq&\, C_{T_0} \left\| u (T_0) \right\|_{H^{1,1}}^2 
    \leq C_{T_0} 4^{T_0/T'} \left\| u_0 \right\|_{H^{1,1}}^2
\end{align*} 
and 
\begin{align*}
\sup_{T_0 \leq t \leq T_1}  \left\| J(t)v(t) \right\|_{L^2}^2 
    \leq&\, \left\| v_0 \right\|_{L^2}^2 + \left\| v(T_0) \right\|_{\dot{H}^1}^2 
               + \left\| J(T_0) v(T_0) \right\|_{L^2}^2 \\
    \leq&\, C_{T_0} \left\| u (T_0) \right\|_{H^{1,1}}^2 \leq C_{T_0} 4^{T_0/T'} \left\| u_0 \right\|_{H^{1,1}}^2, 
\end{align*} 
where we can choose $T_1$ arbitrarily large. Piecing these time local arguments and 
\eqref{eqn;unif-bound}, we have \eqref{eqn;Upper-bound}.

\vspace{2mm}
We next show the $L^2$-decay of dissipative solutions to \eqref{eqn;dNLS}. By the H\"older inequality, it follows that for any $r>0$,
\begin{align}
\label{eqn;L^2-L^4-prop}
  \| f\|_{L^2(|\xi|<r)} 
             \le Cr^{\frac{n(p-1)}{2(p+1)}}\|f\|_{L^{p+1}(|\xi|<r)}
    \quad \text{for any}\,\,f \in L^{p+1}(\re ^n),
\end{align}
where $p \ge 1$ and $\frac{p-1}{p+1}+\frac{2}{p+1}=1$. 
By applying Young's inequality to the estimate \eqref{eqn;L^2-L^4-prop} with 
$\frac{p-1}{p+1}+\frac{2}{p+1}=1$, we see that  
there exists $C>0$ such that for any $\ep>0$ and $y > 0$,
\begin{align}\label{eqn;young}
 \|f\|^2_{L^2(|\xi| <r)}
    \le  C\ep y \|f\|^{p+1}_{L^{p+1}(|\xi|<r)}
         +C\ep^{-\frac{2}{p-1}} y^{-\frac{2}{p-1}} r^{n}.
\end{align}
Let  $v$ be the global solution to \eqref{eqn;NLSv}
with $u_0 \in H^{1,1}(\re^n )$.
Multiplying \eqref{eqn;anal-profile} by
$
\overline{\tilde{v}}=\overline{\F [U^{-1}(t,0)v]}
$
and taking imaginary part, 
we have
\begin{align}
\label{eqn;V2}
  \frac{1}{2}
    \pt_t |\tilde{v}(t,\xi)|^2
      =&\,-\frac{|{\rm Im}\,\lam|}{|y_2(t)|^{\frac{n}{2}(p-1)}} 
            |\tilde{v}(t,\xi)|^{p+1}
    +{\rm Im}\,
      \{R \overline{\F [U^{-1}(t,0)v]}\},
\end{align}
where $R$ denotes the remainder term given by \eqref{eqn;R}
and $R$ satisfies \eqref{eqn;R-e^y}. 
Integrating \eqref{eqn;V2} for $\xi$ over $\re^n$,
we see  that for any $0<\th<\frac{n}{2} (1-p) + \gamma p$ with $\gamma = \frac12$ for $n=1$ and $\gamma =1$ for $n \geq 2$, and $t \geq T_0$,
\begin{align}
\label{eqn;OD}
   \frac{1}{2}\frac{d}{dt} 
     \|\tilde{v}(t)\|_{L^2}^2
    \le\,-\frac{|{\rm Im}\,\lam|}{|y_1(t)|^{\frac{n}{2}(p-1)}}
                       \|\tilde{v}(t)\|_{L^{p+1}}^{p+1}
               +C|y_2(t)|^{-\frac{n}{2}(p-1)}
          |Y(t)|^{-\frac\th2} \left\| u_0 \right\|^{p+1}_{H^{1,1}}.
\end{align}
Let $\al > 0 $ be sufficiently large and 
$Y_2(t)=\int_{T_0}^t |y_2(\t)|^{-\frac{n}{2}(p-1)}\,d\t$. 
From \eqref{eqn;L^2-L^4-prop},
\eqref{eqn;young} with $y = Y_2(t)$ and \eqref{eqn;OD}, 
we see that
\begin{align}
\nn  \frac{d}{dt}\{&Y_2(t)^{\al+1}
          \|\tilde{v}(t)\|_{L^2}^2\}\\
\nn      \le&\,C
        Y_2(t)^{\al}| y_2(t) |^{-\frac{n}{2}(p-1)}
           \Big\{
               \|\tilde{v}(t)\|_{L^2(|\xi| < r)}^2
               + \|\tilde{v}(t)\|_{L^2(|\xi| \ge r)}^2
           \Big\}\\
   \label{eqn;loglog1-prop}
              &\quad 
          +C Y_2(t)^{\al+1} |y_2(t) |^{-\frac{n}{2}(p-1)}
           \Big\{
                -2|\text{Im}\, \lam|
                       \|\tilde{v}(t)\|_{L^{p+1}}^{p+1}    
                +C|Y(t) |^{-\frac\th2}  \left\| u_0 \right\|^{p+1}_{H^{1,1}} 
            \Big\}\\
   \nn  %
     \le&\,CY_2(t)^{\al} | y_2(t) |^{-\frac{n}{2}(p-1)}
              \Big\{  
                 \Big(    
                       \ep Y_2(t) 
                          \| \tilde{v}(t)\|^{p+1}_{L^{p+1}}
                       +\ep^{- \frac{2}{p-1}} Y_2(t)^{- \frac{2}{p-1}}r^{n}
                  \Big)
              +  \|\tilde{v}(t)\|^2_{L^2(|\xi| \ge r)}
                \Big\}\\
      \nn   &\,+ CY_2(t)^{\al+1} |y_2(t) |^{-\frac{n}{2}(p-1)}
           \Big\{
                -2|\text{Im}\, \lam|
                       \|\tilde{v}(t)\|_{L^{p+1}}^{p+1}    
                +C|Y(t)|^{-\frac\th2} \left\| u_0 \right\|^{p+1}_{H^{1,1}}
            \Big\}
\end{align}
By taking $\ep = \ep_5 | \mathrm{Im} \lambda |$ with small $\ep_5 >0$, the first term is absorbed by the fifth term in the above estimate \eqref{eqn;loglog1-prop}, namely we have
\begin{align*}
    \frac{d}{dt}\{Y_2(t)^{\al+1}
          \|\tilde{v}(t)\|_{L^2}^2\}
  \le&\,\,Y_2(t)^{\al} | y_2(t) |^{-\frac{n}{2}(p-1)}
              \Big\{  
                 ({\ep_5} |\mathrm{Im}  \lambda | Y_2(t))^{-\frac{2}{p-1}}r^{n}
                   +  \|\tilde{v}(t)\|_{L^2(|\xi| \ge r)}^2
                \Big\}\\
         & \,\quad
          +CY_2(t)^{\alpha +1} |y_2(t)|^{-\frac{n}{2}(p-1)}
             |Y(t)|^{-\frac\th2} \left\| u_0 \right\|^{p+1}_{H^{1,1}} .
\end{align*}
We estimate the high frequency part of the solution such that 
\begin{align}\label{eqn;gensui}
 \|\tilde{v}(t)\|_{L^2(|\xi| \ge r)}^2
    \le&\,r^{-2}
            \||\xi|\tilde{v}(t)\|^2_{L^2(|\xi| \ge r)}\\
    \nn  \le&\,Cr^{-2}\|\N v(t)\|_{L^2}^2 \le Cr^{-2} \| u_0 \|_{H^{1,1}}^2 .
\end{align}
From \eqref{eqn;loglog1-prop} and \eqref{eqn;gensui}, it holds that
\begin{align}
 \nn \frac{d}{dt}\{&Y_2(t)^{\al+1}
    \|\tilde{v}(t)\|_{L^2}^2\}\\
 \label{eqn;loglog2-prop}     \le&\,CY_2(t)^{\al} | y_2(t) | ^{-\frac{n}{2}(p-1)}
              \Big\{  
                (\ep_5 |\mathrm{Im} \lambda |  Y_2(t) )^{-\frac{2}{p-1}}r^{n} 
                +  r^{-2} \| u_0\|_{H^{1,1}}^2
                \Big\}
                \\ 
  \nn    & +CY_2(t)^{\alpha +1} |y_2(t) |^{-\frac{n}{2}(p-1)}
         |Y(t)|^{-\frac\th2} \left\| u_0 \right\|^{p+1}_{H^{1,1}} .
\end{align}
By taking $r=( | \mathrm{Im} \lambda | Y_2(t) ) ^{\frac{2}{(p-1)(2+n)} } \| u_0 \|_{H^{1,1}}^{\frac{2}{n+2}} $, 
the inequality \eqref{eqn;loglog2-prop} is optimized for $r>0$
and we obtain that
\begin{align}
 \nn    \frac{d}{dt}\{Y_2(t)^{\al+1}&
     \|\tilde{v}(t)\|_{L^2}^2\}\\
 \nn                   %
      \le&\, C
            |y_2(t) |^{-\frac{n}{2}(p-1)}
            Y_2(t)^{\al-\frac{4}{(p-1)(2+n)}} | \mathrm{Im} \lambda |^{- \frac{4}{(p-1) (2+n)}}  \| u_0 \|_{H^{1,1}}^{\frac{2n}{n+2}}
      \\ 
      \label{eqn;diff-decay} &  +CY_2(t)^{\al+1}
           |y_2(t) |^{-\frac{n}{2}(p-1)} |Y(t) |^{-{\frac\th2}} 
          \left\| u_0 \right\|^{p+1}_{H^{1,1}} \\
  \nn  =&\, C
            Y'_2(t)
            Y_2(t)^{\al-\frac{4}{(p-1)(2+n)}}   | \mathrm{Im} \lambda |^{- \frac{4}{(p-1) (2+n)}}  \| u_0 \|_{H^{1,1}}^{\frac{2n}{n+2}}
      \\ 
      \nn &  +CY_2(t)^{\al+1} |y_2(t) |^{-\frac{n}{2}(p-1)} 
      |Y(t) |^{-\frac\th2}  \left\| u_0 \right\|^{p+1}_{H^{1,1}}
\end{align}
Since $y_2(t)$ satisfies \eqref{eqn;critical} or \eqref{eqn;sub-critical}, integrating \eqref{eqn;diff-decay} over $[T_0,t]$, 
we see that  
\begin{align*}
  Y_2(t)^{\al+1} \|\tilde{v}(t)\|_{L^2}^2
      \le&\, C
          Y_2(t)^{\al+1-\frac{4}{(p-1)(2+n)}}  | \mathrm{Im} 
          \lambda |^{- \frac{2}{(p-1) (2+n)}}  
          \| u_0 \|_{H^{1,1}}^{\frac{2n}{n+2}}\\
      &+C\left\| u_0 \right\|^{p+1}_{H^{1,1}}
      \int_{T_0} ^t %Y_2(\tau)^{\alpha +1} 
          \tau^{\delta _{*}(\al+1) -(1- \delta _{*}) - \frac{\delta \th}{2} } 
       d \tau \\
    \le&\,C
          Y_2(t)^{\al+1-\frac{4}{(p-1)(2+n)}}  | \mathrm{Im} 
          \lambda |^{- \frac{2}{(p-1) (2+n)}}  
          \| u_0 \|_{H^{1,1}}^{\frac{2n}{n+2}}
    +C\left\| u_0 \right\|^{p+1}_{H^{1,1}}
           t^{\delta _{*}(\al+1) + \delta _{*}- \frac{\delta \th}{2} }\\
     \le&\,C
          Y_2(t)^{\al+1-\frac{4}{(p-1)(2+n)}}  | \mathrm{Im} 
          \lambda |^{- \frac{2}{(p-1) (2+n)}}  
          \| u_0 \|_{H^{1,1}}^{\frac{2n}{n+2}}
    +C\left\| u_0 \right\|^{p+1}_{H^{1,1}}
           Y_2(t)^{\al+1} t^{\delta _{*}- \frac{\delta \th}{2} }
\end{align*}
Hence, under the assumption $\delta > \frac{2 \del_*}{\theta}$, 
one gets that 
\begin{align*}
Y_2(t)^{\al+1}
          \|\tilde{v}(t)\|_{L^2}^2
      \le C
          Y_2(t)^{\al+1-\frac{4}{(p-1)(2+n)}}  | \mathrm{Im} \lambda |^{- \frac{2}{(p-1) (2+n)}}  \| u_0 \|_{H^{1,1}}^{\frac{2n}{n+2}}
      +C\| u_0 \|_{H^{1,1}}^2 Y_2(t)^{\alpha+1} 
        t^{\delta _{*} - \frac{\delta \th}{2}}.
\end{align*}
Therefore, we conclude by the Plancherel theorem that
\begin{align*}
\|v(t)\|_{L^2} 
        \le C 
        ( Y_2(t) |\mathrm{Im} \lambda| )^{-\frac{2}{(p-1)(2+n)}}
          \| u_0 \|_{H^{1,1}}^{\frac{2n}{n+2}}
             + C( Y_2(t)^{- \alpha -1}  
        + t^{\delta _{*
        } - \frac{\delta \theta}{2}}) \| u_0 \|_{H^{1,1}} ^2 
\end{align*}
 for any $t \geq T_0$. If $p$ is critical, noting $\delta _{*} =0$ and taking $\alpha$ enough large, we get, 
\begin{align*}
\|v(t)\|_{L^2} 
        \le C 
        ( Y_2(t) |\mathrm{Im} \lambda| )^{-\frac{2}{(p-1)(2+n)}}\| u_0 \|_{H^{1,1}}^{\frac{2n}{n+2}}.
\end{align*}
On the other hand, if $p$ is sub-critical, we have 
\begin{align*}
Y_2(t) \geq C \int_{T_0}^t \tau^{-1 + \delta _{*}} d \tau 
  \geq C t^{\delta _{*}}
\end{align*}
and hence we get 
\begin{align*}
\|v(t)\|_{L^2} 
        \le C \| u_0 \|_{H^{1,1}} ^2
        \max\{  ( t^{\delta _{*}} |\mathrm{Im} \lambda| )^{-\frac{2}{(p-1)(2+n)}} , t^{\delta _{*} - \frac{\delta \th}{2}}  \},
\end{align*}
where $0<\th<1$ is arbitrary.
\end{proof}
 
\begin{rem}
If $\sigma (t) \equiv 0$, it can be seen that, we do not need to assume the smallness for initial value $u_0$. In this case, we notice that $\delta _{*} = 1- \frac{n}{2} (p-1)$ and $\delta =1$. 
Hence we find if $\delta _{*} < \frac{\delta \theta }{2 }$, then $p >  p_{**}(n)$, 
which is smaller than $p(n) = \frac{3 + \sqrt{9+n^2 + 2n} }{n+2}$ which was found by \cite{H-L-N-CPAA-2017}. 
%\CR{However, for $n=1$, our method demands $p> \frac{5}{2}$ and that is greater than that of \cite{H-L-N-CPAA-2017} and \cite{H-JMP-2019}.} 
\end{rem}
\begin{rem}
Let $n \geq 3$ and $\sigma (t) \equiv 0$. When the decay order equilibriums, $p$ can be written as 
$p_*(n) = \frac{ n^2 + 3n + 6 + \sqrt{9n^2 + 28n + 36} }{(n+2)^2}$
and we deduce $p_{**}(n) < p_*(n) < p(n)$. Hence we see that if $p> p(n)$, the candidate of decay order is $t^{- \frac{2 \delta _{*}}{(p-1) (2+n)}}$ and which corresponds to the one in Theorem 1 in \cite{H-L-N-CPAA-2017}. 
As for the case, $p_{**}(n) < p < p_*(n)$, we have the weak decay
\begin{align*}
\left\| u(t) \right\|_{L^2} 
   \leq C %\| u_0 \|_{H^{1,1}} ^2  
          t^{\delta _{*} - \frac{\delta \th}{2}}.
\end{align*}
On the other hand, in $p_*(n) < p< p(n)$, we have 
\begin{align*}
\|u(t)\|_{L^2} 
        \le C %\| u_0 \|_{H^{1,1}} ^2 
        (t^{\delta _{*}} |\mathrm{Im} \lambda| )^{-\frac{2}{(p-1)(2+n)}} 
\end{align*}
and see that this estimate will be a natural extension of the result of \cite{H-L-N-CPAA-2017} from the region $p(n) < p < 1 + \frac{2}{n}$ to the region $p_*(n) < p < 1 + \frac{2}{n}$.
\end{rem}

\section{Models of $\sigma (t)$.}
Here, we introduce some models of $\sigma (t)$. \\ ~~ \\ 
{\bf Example 1.:} We consider the case where $\sigma (t) = \sigma _0 t^{-2} $, $t \geq T_0$, $\sigma _0 \in [0,1/4)$. Then employing 
$\mu = (1 - \sqrt{1- 4 \sigma _0})/2 \in [0,1/2)$, we have that 
\begin{align*}
y_0 (t) = \alpha t^{\mu} + \beta t^{1- \mu}.
\end{align*}
Assuming the suitable condition on $\sigma (t)$, one finds $y_1 (t) = c_1 t^{\mu}$ and $y_2(t) = c_2t^{1- \mu} + c_3 t^{\mu} $ with $c_1, c_2 \neq 0$ and $c_3 \in \R$, (see, e.g., \cite{KY-JEE-2018}). Then it follows that 
\begin{align*}
\left|
 \frac{y_1(t)}{y_2(t)}
\right| \leq C t^{1- 2 \mu}, \quad \lim_{t \to \infty} t |y_2(t)|^{-\frac{n}{2} (p-1)} \sim \lim_{t \to \infty} t |t|^{-\frac{n}{2} (p-1) (1- \mu)}
\end{align*}
i.e., $\delta = 1-2 \mu$ and $p = 1 + \frac{2}{n(1- \mu)}$ is critical. 
\\ ~~ \\ 
{\bf Example 2.:} Let $\sigma (t) = - \rho t^{- 2}$, $t \geq T_0$, $\rho \geq 0$. Then using $\theta _{-} = (1 - \sqrt{1 + 4 \rho})/2 < 0$, we have 
\begin{align*}
y_0 (t) = \alpha t^{1-\theta _-} + \beta t^{\theta _-}. 
\end{align*}
By taking suitable $\sigma (t)$, we find $y_1 (t) = c_1 t^{\theta_-}$ and $y_2(t) = c_2t^{1-\theta _-} + c_3 t^{\theta _-} $ with $c_1, c_2 \neq 0$ and $c_3 \in \R$, which tells us 
\begin{align*}
\left|
 \frac{y_1(t)}{y_2(t)}
\right| \leq C t^{1- 2 \theta _-}, \quad \lim_{t \to \infty} t |y_2(t)|^{-\frac{n}{2} (p-1)} \sim \lim_{t \to \infty} t |t|^{-\frac{n}{2} (p-1) (1- \theta _-)},
\end{align*}
i.e., $\delta = 1-2 \theta_-$ and $p = 1 + \frac{2}{n(1- \theta _-)}$ is critical. 
\\ ~~ \\ 
{\bf Example 3.:} Consider the case where $\lim_{t \to \infty} t^2\sigma (t) = 0$. In that case, it was known that there exist $c_1 \neq 0$ and $c_2 \neq 0$ such that 
\begin{align*}
\lim_{t \to \infty} y_1(t) = c_1 , \quad \lim_{t \to \infty} \frac{y_2(t)}{t} = c_2, 
\end{align*}
see, e.g., Naito \cite{Na}. In this case, we have $\delta =1$ and $p = 1 + \frac{2}{n}$ is critical.

\vskip5mm\noindent
{\bf Acknowledgment.}
The first author is supported by 
JSPS grant-in-aid for Early-Career Scientists
\#20K14328 and 
the second  author is supported by JSPS grant-in-aid for Early-Career Scientists 
\#22K13937.

%--------------------------------------------------------------------
% references
%--------------------------------------------------------------------

\end{document}